\newtheorem{theorem}{Theorem}
\newtheorem{lemma}{Lemma}
\newtheorem{remark}{Remark}
\newtheorem{definition}{Definition}
\newtheorem{proposition}{Proposition}
\newtheorem{example}{Example}
\newtheorem{corollary}{Corollary}
\newcommand{\cred}[1]{{\color{red}{#1}}}
\def \S {\Sigma}
\def \bR {\mathbb{R}}
\begin{document}

\title{
EBIF: Exact Bilinearization Iterative Form for Control-Affine Nonlinear Systems}
\author{Yuan-Hung Kuan, \IEEEmembership{Member, IEEE}, and Jr-Shin Li, \IEEEmembership{Fellow, IEEE}
\thanks{The authors are with the Department of Electrical and Systems Engineering, Washington University in St. Louis, St. Louis, MO 63130 USA (e-mail: {\tt\small k.yuan-hung@wustl.edu, jsli@wustl.edu})}
}
\maketitle

\begin{abstract}
In this paper, we develop a novel framework, Exact Bilinearization Iterative Form (EBIF), for transforming a nonlinear control-affine system into an exact finite-dimensional bilinear representation. In contrast to most existing approaches which generally lead to an infinite-dimensional representation, the proposed EBIF approach yields an iterative procedure for constructing a finite set of smooth coordinate functions that define an embedding, enabling an exact bilinear representation of the original nonlinear dynamics.
Leveraging tools from algebra and differential geometry, we establish both necessary and sufficient conditions for a nonlinear system to be exactly bilinearizable. 
We further illustrate how the EBIF-induced bilinear systems facilitate reachability analysis and control design. Through theoretical analysis and numerical simulations, we demonstrate the effectiveness of the EBIF framework and highlight its potential in simplifying control synthesis for nonlinear systems.
\end{abstract}

\begin{IEEEkeywords}
Nonlinear control-affine systems, Exact bilinearization, Iterative algorithm, Smooth Embedding
\end{IEEEkeywords}

\section{Introduction} \label{sec:introduction}
\IEEEPARstart{T}{he} analysis and control of nonlinear systems remain a fundamental challenge in control theory due to the inherent complexity of their nonlinear behavior \cite{Isidori_Springer95, Slotine_ANC91, Krener_SCL83}. These classes of control systems are prevalent in engineering and natural sciences, governing a wide range of phenomena from fluid dynamics \cite{Slotine_ANC91} to biological processes \cite{Iqbal_NE17} and robotic systems \cite{Olfati_MIT01}. However, unlike linear systems, nonlinear systems lack many of the convenient structural properties that facilitate analysis and design, rendering fundamental concepts such as controllability and observability become highly nontrivial and often require case-specific tools and assumptions.
Among various approaches addressing this complexity, \textit{bilinearization}, the transformation of nonlinear systems into bilinear forms, has emerged as a powerful tool for enabling tractable analysis and control synthesis \cite{Schwarz_Springer88, Pardalos_Springer10,Lo_SIAM75,Deutscher_MCMDS05}. This transformation establishes a structured approximation or equivalence between the original nonlinear dynamic and a bilinear representation, enabling the use of analytical and computational tools that have been extensively developed for bilinear systems \cite{Tie_TAC23, Tie_JCO14, Breiten_SCL10, Bruni_TAC74, Brockett_Springer75, Brockett_Springer81, Brockett_springer73}.

Classical techniques such as Carleman linearization (or bilinearization) \cite{Tsiligiannis_JMAA89, Kowalski_WS91, Rauh_IFAC09, Amini_Arxiv22, Steeb_JMAA80, Rotondo_ECC22, Fang_CDC16} involves approximating a nonlinear system by an infinite-dimensional linear system through the application of polynomial expansions. This class of methods expresses the nonlinear analytic vector field as a Taylor series and constructs an augmented state space by stacking higher-order monomials of the original state variables. Although this approach provides a formal linear or bilinear structure, its practical use is limited by the rapid growth of system dimension and the truncation error introduced when approximating the infinite-dimensional system with a finite-dimensional one. 
Recent advances in Koopman operator theory have inspired new directions in bilinearization by leveraging observable functions to approximate nonlinear dynamics through linear or bilinear representations \cite{Mauroy_Springer20, Bevanda_ARC21, Goswami_ARC22, Bruder_RAL21, Narasingam_IJC23, Zhang_Arxiv22}. This class of approaches has emerged as a powerful framework for analyzing nonlinear systems by lifting the dynamics into a space of observable functions where the evolution is governed by an infinite-dimensional linear (or bilinear) operator. Notably, this framework enables global linear representations of nonlinear systems, which has been extensively studied in the context of system identification \cite{Mauroy_TAC19}, prediction \cite{Otto_SIAM24}, stabilization \cite{Narasingam_IJC23, Huang_Springer20}, and control \cite{Bevanda_ARC21, Goswami_ARC22, Bruder_RAL21}. 
Several recent works have proposed data-driven techniques to approximate the Koopman operator using finite-dimensional subspaces spanned by learned or pre-selected observables \cite{Brunton_Arxiv21, Otto_ARC21, Narasingam_IJC23, Goswami_ARC22, Kaiser_ML21}. While this offers significant flexibility, one major challenge remains: there is no guarantee that a finite-dimensional Koopman-invariant subspace exists that accurately captures the nonlinear system dynamics, especially when control inputs are present. 

In this paper, we propose the Exact Bilinearization Iterative Form (EBIF), a systematic framework that provides a necessary and sufficient condition for determining when a nonlinear control-affine system can be exactly represented as a finite-dimensional bilinear system. EBIF leverages the construction of invariant vector spaces using Lie derivatives and exploits the ascending chain property of vector space Noetherian to construct an exact bilinear embedding. In addition to its theoretical foundations, EBIF provides a systematic algorithm for constructing both the bilinear representation and the corresponding transformation map, enabling practical application in theoretic analysis and control design. We further illustrate how the resulting bilinear system can be effectively used and simplify reachability analysis and optimal control of the original nonlinear system.

The remainder of this paper is organized as follows. In Section II, we formally introduce the concept of exact bilinearization. Section III presents the theoretical formulation of EBIF framework including the Lie derivative and Lie invariance of modules and establishes the necessary and sufficient conditions for a nonlinear system to be exactly bilinearizable. Leveraging the EBIF framework, in Section IV, we analyze reachability for an exact bilinearizable nonlinear system. Finally, Section V illustrates the effectiveness of the proposed framework through various simulation studies, including systems stabilization and optimal control design for steering problems.


\section{Exact bilinearization of nonlinear control-affine systems} \label{sec:background}
In this section, we introduce the concept of \textit{exact bilinearization} for nonlinear control-affine systems. This notion, though aligned, is fundamentally different from existing techniques, such as the notable Carleman linearization or Koopman-based approaches. These techniques rely on lifting a finite-dimensional nonlinear system to an infinite-dimensional linear or bilinear system, which can then be approximated through finite-dimensional truncations \cite{Amini_Arxiv22, Rotondo_ECC22, Goswami_ARC22, Bruder_RAL21}. In contrast, the scenario of exact bilinearization transforms a nonlinear system into a bilinear form of higher, yet still finite, dimension. More importantly, the transformed bilinear system exactly reflects the dynamics of the original nonlinear system without any approximation. This feature, in turn, enables tractable and efficient systems-theoretic analysis and computation for nonlinear systems through the consideration of bilinear dynamics.

In this work, we focus on the control-affine nonlinear system defined on $\mathbb{R}^n$, described by
\begin{align} \label{eq:Sigma}
    \Sigma:\ \dot{x}(t) &= f(x(t)) + \sum_{i=1}^{m} u_i(t) g_i(x(t)) ,
\end{align}
where $x(t)\in \mathbb{R}^n$ denotes the state of the system and $u(t)=(u_1(t),\ldots,u_m(t))'\in\mathcal{U}\subset\mathbb{R}^{m}$ is the control input taking values from the admissible control set $\mathcal{U}$. Here, $f,g_i\in \mathfrak{X}(\mathbb{R}^n)$ are smooth drift and control vector fields, respectively, defined on $\mathbb{R}^n$, where $\mathfrak{X}(\mathbb{R}^n)$ denotes the space of all smooth vector fields on $\mathbb{R}^n$. Our development aims to construct a \emph{smooth embedding} $\Psi$ that transforms this nonlinear system $\Sigma$ into the following finite-dimensional bilinear system,
\begin{align} 
    \label{eq:Sigma_b}
    \Sigma_b:\dot{z}(t) &= Az(t) + \sum_{i=1}^{m}u_i(t) B_iz(t), \\
    &:= \bar{f}(z(t)) + \sum_{i=1}^{m}u_i(t) \bar{g}_i(z(t)), \nonumber 
\end{align}
where $z(t)=\Psi(x(t))\in \mathbb{R}^{r}$ is the transformed state of $x(t)$ evolving on the embedded submanifold $\Psi(\mathbb{R}^n)\subset \mathbb{R}^r$ with $n<r<\infty$. Here, $\bar f$ and $\bar g_i$ are the drift and control vector fields of $\Sigma_b$ defined on $\mathbb{R}^r$, and $A,B_i\in\mathbb{R}^{r \times r}$ are constant matrices. Note that as $\Psi$ maps the original state space $\mathbb{R}^n$ into a higher-dimensional space $\mathbb{R}^r$, it is referred to as a \textit{lifting map}, and $z(t)$ is the associated \textit{lifting state}. If such a ``lifting'' exists, then we say that the system $\Sigma$ in \eqref{eq:Sigma} is exactly bilinearizable, with $\Sigma_b$ being its bilinear representation, and refer to $(\Sigma,\Sigma_b)$ as an exact bilinerization pair.

\begin{definition}[Exact bilinearization] \label{def:exactbilinearizable}
    We say a control-affine nonlinear system $\Sigma$, as described in \eqref{eq:Sigma}, is \emph{exactly bilinearizable} if there exists a smooth embedding $\Psi:\mathbb{R}^{n}\hookrightarrow \mathbb{R}^{r}$ defined by $x \mapsto z = \Psi(x)$ for all $x\in \mathbb{R}^n$ with $n< r<\infty$, such that the transformed system admits a finite-dimensional bilinear representation $\Sigma_b$, as given in \eqref{eq:Sigma_b}.
\end{definition}

To illustrate this definition with a concrete example, we consider the control-affine system modeling unicycle dynamics, which is commonly encountered in the field of robotics \cite{Aicardi_RAM95, Kuan_CDC24}.

\begin{example} \label{ex:unicycle_derive}
    Consider the system describing the dynamic of a unicycle, given by
    \begin{align*}
        \dot{ x}(t) = u_1(t) \begin{pmatrix} \cos(x_3(t)) \\ \sin(x_3(t)) \\ 0\end{pmatrix} + u_2(t) \begin{pmatrix} 0 \\ 0 \\ 1 \end{pmatrix} ,
    \end{align*}
    where $x(t) = ( x_1(t), x_2(t), x_3(t) )^{'}\in \mathbb{R}^3$ is the state and $u(t) = ( u_1(t), u_2(t) )'\in\mathbb{R}^2$ is the control input. Defining the smooth embedding $\Psi: \mathbb{R}^3\hookrightarrow \mathbb{R}^6$ by $z(t) = \Psi(x(t)) = (x_1(t),x_2(t),x_3(t),\cos x_3(t),\sin x_3(t),1)^{'}$, we observe that the system $x(t)$ can be lifted to a bilinear representation $z(t)$ driven by the same control inputs of the form,   
    \begin{align*}
        \dot{z}(t) &= u_1(t) B_1 z(t) + u_2(t) B_2 z(t),
    \end{align*}
    where $B_1 = e_1e_4^{'}+e_2e_5^{'}$ and $B_2 = e_3e_6'-e_4e_5^{'}+e_5e_4^{'}$ are constant matrices, and $e_j\in\mathbb{R}^6$ denotes the standard vector with $1$ in the $j^{th}$ component and $0$ elsewhere. 
\end{example}

\begin{remark}
    Alternatively, one may define a nonlinear system $\S$ in \eqref{eq:Sigma} to be exactly bilinearizable if it can be transformed into an inhomogeneous bilinear system of the form,
    {\small
    \begin{align} \label{eq:bilinearsystem}
        \Sigma_b':\dot{z}(t) = (B_0z(t)+D_0) + \sum_{i=1}^{m} u_i(t) (B_iz(t)+D_i),
    \end{align}
    }where $B_i\in\mathbb{R}^{r \times r}$ and $D_i\in\mathbb{R}^{r \times 1}$, $i=0,1,\ldots,m$, are constant matrices. This definition is consistent with Definition \ref{def:exactbilinearizable} as, without loss of generality, we may disregard the terms $D_i$ in \eqref{eq:bilinearsystem} by introducing an auxiliary state $\bar{z}(t) = (z(t)^{'},1)^{'}$. This augmentation leads to the bilinear system in the auxiliary state $\bar{z}$ as
    \begin{align*} 
        \dot{ \bar{z}}(t) &= \begin{pmatrix} B_0 & D_0\\0 & 0 \end{pmatrix}\bar{z}(t) + \sum_{i=1}^{m} u_i(t)\begin{pmatrix} B_i & D_i\\0 & 0 \end{pmatrix}\bar{z}(t),
    \end{align*}
    which is in the form of \eqref{eq:Sigma_b}.
\end{remark}

To characterize the relationship between the vector fields in the original and transformed systems and facilitate our analysis, we introduce the notion of $\Psi$-related vector fields. 

\begin{definition}[$\Psi$-related vector fields] \label{def:exactbilinearizable}
    Given vector fields $\tau_1$ on $\mathbb{R}^n$ and $\tau_2$ on $\mathbb{R}^r$, they are said to be \emph{$\Psi$-related} if $\frac{\partial\Psi}{\partial x}\cdot\tau_1(x)=\tau_2(\Psi(x))$ for all $x\in \mathbb{R}^n$, where $\frac{\partial\Psi}{\partial x}$ is the Jacobian matrix of $\Psi$. In this case, the vector $\tau_2\doteq\Psi_*\tau_1$ is the (pointwise) \emph{pushforward} of $\tau_1$ by $\Psi$ \cite{Lee_Springer03}.
\end{definition}

\begin{proposition} \label{prop:psirelated}
    Suppose $(\Sigma,\Sigma_b)$ is an exact bilinerization pair under the smooth map $\Psi$. Then, $f$ and $\bar f$, as well as $g_i$ and $\bar g_i$ for each $i=1,\dots,m$, are $\Psi$-related. Equivalently, the following commutative diagram holds: 
    \begin{center}
        \begin{tikzcd}[column sep=large]
        \mathbb{R}^n \arrow[r,"\Psi"] \arrow[d,"X_i"'] & \mathbb{R}^r \arrow[d,"Y_i"]\\
        T\mathbb{R}^n \arrow[r,"\Psi_{\ast}"] & T\mathbb{R}^r
        \end{tikzcd}
    \end{center}
    
\end{proposition}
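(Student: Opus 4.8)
The plan is to unwind the definitions and push the bilinear identity \eqref{eq:Sigma_b} through the chain rule. Recall that along any trajectory $x(t)$ of $\Sigma$, the lifted curve is $z(t)=\Psi(x(t))$, so differentiating gives $\dot z(t)=\frac{\partial\Psi}{\partial x}\big|_{x(t)}\cdot\dot x(t)$. Substituting $\dot x(t)=f(x(t))+\sum_i u_i(t)g_i(x(t))$ on the right and the bilinear form $\dot z(t)=\bar f(\Psi(x(t)))+\sum_i u_i(t)\bar g_i(\Psi(x(t)))$ on the left yields, for every $x$ in the image of a trajectory,
\begin{align*}
    \frac{\partial\Psi}{\partial x}\Big(f(x)+\sum_{i=1}^m u_i g_i(x)\Big)=\bar f(\Psi(x))+\sum_{i=1}^m u_i\,\bar g_i(\Psi(x)).
\end{align*}

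The next step is to decouple this into the individual $\Psi$-relations. First I would argue that the identity holds for \emph{all} $x\in\mathbb{R}^n$, not merely along a single trajectory: since $f$ and the $g_i$ are smooth and $\mathcal U$ contains enough inputs, through every point of $\mathbb{R}^n$ there passes a trajectory of $\Sigma$ (take $u\equiv 0$ locally, e.g.), so the pointwise equality extends to the whole state space. Then I would exploit the fact that $u\in\mathcal U$ can be chosen freely (in particular $\mathcal U$ has nonempty interior, or at least contains $0$ and $m$ linearly independent directions): evaluating at $u=0$ gives $\frac{\partial\Psi}{\partial x}f(x)=\bar f(\Psi(x))$, i.e. $f$ and $\bar f$ are $\Psi$-related; subtracting this and then varying $u_i$ one coordinate at a time isolates $\frac{\partial\Psi}{\partial x}g_i(x)=\bar g_i(\Psi(x))$ for each $i$, i.e. $g_i$ and $\bar g_i$ are $\Psi$-related. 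This is exactly Definition \ref{def:exactbilinearizable} ($\Psi$-related vector fields) with $\bar f=\Psi_*f$ and $\bar g_i=\Psi_*g_i$.

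Finally, I would note that the commutative diagram is just a repackaging of these relations: writing $X_i$ for the vector fields $f,g_1,\dots,g_m$ on $\mathbb{R}^n$ viewed as sections $\mathbb{R}^n\to T\mathbb{R}^n$, and $Y_i$ for $\bar f,\bar g_1,\dots,\bar g_m$ as sections $\mathbb{R}^r\to T\mathbb{R}^r$, the pushforward $\Psi_*:T\mathbb{R}^n\to T\mathbb{R}^r$ is the tangent (Jacobian) map, and the relation $\frac{\partial\Psi}{\partial x}\cdot X_i=Y_i\circ\Psi$ says precisely that $\Psi_*\circ X_i=Y_i\circ\Psi$, which is commutativity of the square. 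I should be a bit careful here that $\Psi_*$ is well defined as a bundle map since $\Psi$ is a smooth embedding (its differential is injective), so the diagram makes sense fiberwise.

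The main obstacle I anticipate is not the computation but the logical subtlety of passing from "the identity holds along trajectories" to "the identity holds pointwise for all $x$ and for all admissible $u$", which quietly requires a mild assumption on $\mathcal U$ (containing $0$, or an open neighborhood of $0$, or at least affinely spanning $\mathbb{R}^m$ together with $0$) and on completeness/existence of local solutions; I would state this explicitly rather than leave it implicit, since without it one can only conclude the $\Psi$-relations on the reachable set and along the realized input directions.
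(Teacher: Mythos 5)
Your proposal is correct and follows essentially the same route as the paper's proof: differentiate $z(t)=\Psi(x(t))$ by the chain rule, substitute the dynamics of $\Sigma$, and compare with the bilinear form of $\Sigma_b$. The only difference is that you make explicit the decoupling step (extending from trajectories to all $x$ and varying $u$ to isolate $f$ from the $g_i$) that the paper compresses into the phrase ``by comparing the system with $\Sigma_b$,'' which is a reasonable bit of added care rather than a different argument.
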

\begin{proof}
    Because $\Sigma_b$ is an exact bilinearization of $\Sigma$ by $\Psi$, we have $z(t)=\Psi(x(t))$ 
    for all $t$. Then, the chain rule yields 
        \begin{align}
         \dot z(t)&=\frac{d}{dt}\Psi(x(t))=\frac{\partial\Psi}{\partial x}\cdot\dot x(t)\nonumber\\
         &=\frac{\partial\Psi}{\partial x}\cdot\Big(f(x(t))+\sum_{i=1}^mu_i(t)g_i(x(t))\Big)\nonumber\\
         &=\Psi_*f(z(t))+\sum_{i=1}^mu_i(t)\Psi_*g_i(z(t)),\label{eq:bilinear_derivation}
    \end{align}
    where the last equality follows from the definition of pushforward vector fields. By comparing the system in \eqref{eq:bilinear_derivation} with $\Sigma_b$, we conclude that $\Psi_*f(z)=\bar f(z)=Az$ and $\Psi_*g_i(z)=\bar g_i(z)=B_iz$ as desired.    
\end{proof}

A fundamental question of theoretical and practical importance is then to investigate under what conditions a finite-dimensional bilinear system $\Sigma_b$ exists. To explore this question, in the next section, we will introduce the notion of invariance with respect to the module of vector fields generated by $f$ and $g_i$ of the nonlinear systems $\S$. This invariance property plays a crucial role in formulating the exact bilinearization conditions.

\section{Exact bilinearization induced by smooth coordinate functions} \label{sec:ebif}
 In this section, we introduce the Exact Bilinearization Iterative Form (EBIF) as the building block for constructing the necessary and sufficient exact bilinearization condition. The development of EBIF extensively exploits the algebraic structure of the space of smooth vector fields as modules over the ring of smooth functions. The main approach is to utilize Lie derivative iterations in EBIF as a systematic process to construct a sequence of expanding subspaces in which the vector fields governing the dynamics of a nonlinear system admit a linear representation.
\subsection{Coordinate representations of bilinear systems} 
\label{sec:coordinates}
The core of the EBIF for a nonlinear system is based on taking successive Lie derivative operations on an appropriate set of smooth functions along the vector fields governing the system dynamics. This iterative process systematically expands the chosen set of smooth functions, eventually constituting a coordinate system for the transformed system once the expansion stabilizes. Specifically, these newly generated coordinate functions place the transformed system in a bilinear representation, with both its drift and control vector fields being linear under these new coordinates. 


\subsubsection{Geometric construction of bilinear systems}
To further elaborate on the concept, let's consider a $\Psi$-related exact bilinerization pair $(\Sigma,\Sigma_b)$, as defined in Definition \ref{def:exactbilinearizable}. In this case, the nonlinear vector fields $f(x)$ and $g_i(x)$ governing $\Sigma$ are simultaneously transformed by $\Psi_*$ to higher-dimensional linear vector fields $Az$ and $B_iz$, respectively, governing $\Sigma_b$. In other words, each element (coordinate function) of $f$ and $g_i$ is linearized by $\Psi$. To express this geometrically, let $x=(x_1,\dots,x_n)$ and $z=(z_1,\dots,z_r)=(\Psi_1(x),\dots,\Psi_r(x))$ be the coordinate systems on $\mathbb{R}^n$ and $\mathbb{R}^r$, respectively. Then, the coordinate representations of the linear vector fields $\bar f(z)=Az$ and $\bar g_i(z)=B_iz$ are given by $\bar f(z)=\sum_{j=1}^r\sum_{l=1}^ra_{jl}z_l\frac{\partial}{\partial z_j}$ and $\bar g_i(z)=\sum_{j=1}^r\sum_{l=1}^rb_{jl}^iz_l\frac{\partial}{\partial z_j}$, where $a_{jl}$ and $b_{jl}^i$ are the $(j,l)$-entries of the matrices $A$ and $B_i$, respectively. Representing the $\Psi$-related property shown in Proposition \ref{prop:psirelated} using these coordinates further reveals the component-wise relationship between $f$ and $\bar f$, as well as $g_i$ and $\bar g_i$, as follows:

\begin{align*}
    \bar f=\sum_{j=1}^r\sum_{l=1}^n f_l\frac{\partial\Psi_j}{\partial x_l}\frac{\partial}{\partial z_j}=\sum_{j=1}^r\mathcal{L}_f\Psi_j\frac{\partial}{\partial z_j}
\end{align*}
and
\begin{align*}
    \bar g_i=\sum_{j=1}^r\sum_{l=1}^n g_{il}\frac{\partial\Psi_j}{\partial x_l}\frac{\partial}{\partial z_j}=\sum_{j=1}^r\mathcal{L}_{g_i}\Psi_j\frac{\partial}{\partial z_j},
\end{align*}
respectively, where $f=\sum_{l=1}^nf_l\frac{\partial}{\partial x_l}$ and $g_i=\sum_{l=1}^ng_{il}\frac{\partial}{\partial x_l}$ are the coordinate representations of $f$ and $g_i$, and $\mathcal{L}_f\Psi_j$ and $\mathcal{L}_{g_i}\Psi_j$ denote the Lie derivatives of $\Psi_j\in C^\infty(\mathbb{R}^n)$ with respect to $f$ and $g_i$, respectively. Here, the first equality follows from the property of $\Psi$-related vector fields, with the detailed derivations provided in Appendix Lemma \ref{lemma:coord}.
As a result, each component of the system $\Sigma_b$ in the $z$-coordinates, obeying
\begin{align}
    \label{eq:bilinear_coordinate}
    \dot{z}_j(t) = \mathcal{L}_{f}\Psi_j(x(t)) + \sum_{i=1}^m \mathcal{L}_{g_i}\Psi_j(x(t))
\end{align}
for $j=1,\ldots,r$, is linear in $z=\Psi(x)$. 
This derivation indicates that the coordinate transformation $\Psi$ ``stretches'' all the nonlinear $x$-coordinate functions, i.e., $f_l$ and $g_{il}$, simultaneously into linear $z$-coordinate functions. This motivates a general strategy toward developing exact bilinearization using coordinate functions of vector fields from nonlinear systems to generate linear vector fields that synthesize bilinear systems.

\subsubsection{Algebraic viewpoint of bilinearization}
\label{sec:algebra}
The above geometric construction provides a local derivation of the bilinearized system, involving the linearization of the nonlinear vector fields $f$ and $g_i$ at each point in $\mathbb{R}^n$. The global nature of bilinearization can be understood through an algebraic interpretation of vector fields. 
Because the tangent bundle $T\mathbb{R}^n$ of $\bR^n$ is globally trivialized as $\mathbb{R}^n\times\mathbb{R}^n$ \cite{Lee_Springer03}, the space of smooth vector fields on $\mathbb{R}^n$, denoted $\mathfrak{X}(\mathbb{R}^n)$, is a module over $C^\infty(\mathbb{R}^n)$ generated by this coordinate frame, where $C^\infty(\mathbb{R}^n)$ denotes the ring of real-valued smooth functions defined on $\mathbb{R}^n$. Hence, $\mathfrak{X}(\mathbb{R}^n)$ is isomorphic to $C^\infty(\mathbb{R}^n)\otimes\mathbb{R}^n$, where $\otimes$ denotes the tensor product of modules (vector spaces) over $\mathbb{R}$. In this algebraic setup, the embedding $\Psi:\mathbb{R}^n\rightarrow\mathbb{R}^r$ generates a vector bundle homomorphism $\frac{\partial\Psi}{\partial x}:T\mathbb{R}^n\rightarrow T\mathbb{R}^r$, which uniquely determines a module homomorphism from $C^\infty(\mathbb{R}^n)\otimes\mathbb{R}^n$ to $C^\infty(\mathbb{R}^r)\otimes\mathbb{R}^r$. Interpreting the bilinearization operation from this perspective, the action of $\frac{\partial\Psi}{\partial x}$ on the $C^\infty(\mathbb{R}^n)$-component maps those generated by the nonlinear coordinate functions $f_i(x)$ and $g_{ij}(x)$, for $i=1,\dots,n$ and $j=1,\dots,m$, to the ideal generated by linear functions $\sum_{j=1}^ra_{ij}z_j$ and $\sum_{j=1}^rb^k_{ij}z_j$, for $i=1,\dots,r$ and $k=1,\dots,m$ (supported on $\Psi(x)$). Parallel to the geometric construction, this algebraic viewpoint also suggests using the coordinate functions of the vector fields in the nonlinear system to generate the bilinearization transformation.

\subsection{Exact bilinearization iterative form} \label{subsec:EBIF}
Following the ideas developed in Section \ref{sec:algebra}, in this section, we will formally introduce EBIF, which provides a systematic machinery to iteratively construct a coordinate system that enables a bilinear representation $\S_b$ 
of system $\S$ 
using the nonlinear coordinate functions in the vector fields of system $\S$. Specifically, we will establish EBIF using the algebraic interpretation presented in Section \ref{sec:algebra}.

\begin{definition}[Lie derivation over modules] \label{def:LtauGamma}
    Let $\Gamma$ be the free module over a commutative ring $\mathbb{A}$, generated by a finite set $\{\gamma_1,\ldots,\gamma_d\}$ of functions in $C^\infty(\mathbb{R}^n)$, denoted by $\Gamma=\langle\gamma_1,\ldots,\gamma_d\rangle_{\mathbb{A}}$. The  \emph{Lie derivation} of $\Gamma$ with respect to a vector field $\tau\in\mathfrak{X}(\mathbb{R}^n)$ is defined as  
    \begin{align}
    \label{eq:Lie_derivation}
        \mathcal{L}_{\tau}\Gamma = \big\langle \mathcal{L}_{\tau}\gamma_1,\dots, \mathcal{L}_{\tau}\gamma_d \big\rangle_{\mathbb{A}},
    \end{align} 
    the free $\mathbb{A}$-module generated by $\mathcal{L}_\tau\gamma_i\in C^\infty(\mathbb{R}^n)$ for all $i=1,\dots,d$.
\end{definition}

Note that the Lie derivation defined above immediately gives rise to an $\mathbb{A}$-module homomorphism from  $\Gamma$ to $\mathcal{L}_\tau\Gamma$, denoted by $\mathcal{L}_\tau$ as well. This $\mathbb{A}$-module homomorphism is defined in the natural way as $\mathcal{L}_\tau\big(\sum_{i=1}^d \alpha_i\gamma_i\big)=\sum_{i=1}^d\alpha_i\mathcal{L}_\tau\gamma_i$ for any $\alpha_1,\ldots,\alpha_d\in\mathbb{A}$. Because  $\mathcal{L}_\tau\Gamma$ is generated by $\mathcal{L}_\tau\gamma_1$, $\dots$, $\mathcal{L}_\tau\gamma_d$, $\mathcal{L}_\tau$ is a subjective homomorphism (see more details in Lemma \ref{lemma:Kstationaryproof} in Appendix), equivalently, an \emph{epimorphism} in the category of $\mathbb{A}$-modules. On the other hand, both $\Gamma$ and $\mathcal{L}_\tau\Gamma$ are submodules of $\langle C^\infty(\mathbb{R}^n)\rangle_{\mathbb{A}}$, and hence it is possible to analyze their inclusion relation. 


\begin{definition}[Lie invariance]
    \label{def:invariant}
     Given $\tau\in\mathfrak{X}(\mathbb{R}^n)$, a finitely generated free $\mathbb{A}$-submodule $\Gamma$ of $\langle C^\infty(\mathbb{R}^n)\rangle_{\mathbb{A}}$ is said to be \emph{$\tau$-invariant} if $\mathcal{L}_\tau\Gamma\subseteq\Gamma$. Moreover, $\Gamma$ is \emph{$\mathcal{T}$-invariant} for a set $\mathcal{T}=\{\tau_1,\dots,\tau_m\}$ of vector fields in $\mathfrak{X}(\mathbb{R}^n)$ if $\Gamma$ is $\tau_i$-invariant for all $i=1,\dots,m$.     
\end{definition}

The Lie invariant property plays a crucial role in developing the exact bilinearization technique. In particular, the system in \eqref{eq:bilinear_coordinate} illuminates the Lie derivative representation of bilinearized vector fields. Therefore, for generating a finite-dimensional bilinearized system, it is necessary to construct a finitely generated Lie invariant submodule of $\langle C^\infty(\mathbb{R}^n)\rangle_{\mathbb{A}}$ that is generated by finitely many smooth functions. This reveals the main source of inspiration for our exact bilinearization technique, which gives a systematic approach to the construction of such a module.

The main idea of our exact bilinearization technique is to iteratively expand a non-Lie invariant submodule of $\langle C^\infty(\mathbb{R}^n)\rangle_{\mathbb{A}}$ using Lie derivation and internal direct sum operations, until the Lie invariance property is satisfied. For this reason, we refer to our technique as the \textit{exact bilinearization iterative form} (EBIF).

\begin{definition}[Exact bilinearization iterative form] \label{def:K_stationary}
    Let $\Gamma_0$ be a finitely generated free $\mathbb{A}$-submodule of $\langle C^\infty(\mathbb{R}^n)\rangle_{\mathbb{A}}$ and $\mathcal{T}$ be a finite set of smooth vector fields in $\mathfrak{X}(\mathbb{R})$, then we say $\{\Gamma_k\}_{k\in\mathbb{N}}$ is an EBIF sequence generated by $\mathcal{T}$ (initiated with $\Gamma_0$) if it is a sequence of free $\mathbb{A}$-submodules of $\langle C^\infty(\mathbb{R}^n) \rangle_{\mathbb{A}}$ generated by the \textbf{Exact Bilinearization Iterative Form (EBIF)}, defined by
        \begin{equation} \label{eq:ebif}
      \Gamma_k = \begin{cases}
         \ \Gamma_0, & k=0 \\
         \ \Gamma_{k-1} + \sum\nolimits_{\tau\in \mathcal{T}} \mathcal{L}_{\tau} \Gamma_{k-1}, & k\in\mathbb{N}
      \end{cases}~,
    \end{equation}
    where the sums are all internal direct sums of $\mathbb{A}$-modules, e.g., $\sum_{i=1}^m \mathcal{L}_{\tau_i}\Gamma_{k-1}=\big\{\sum_{i=1}^m\mathcal{L}_{\tau_i}\gamma_i:\gamma_i\in\Gamma_{k-1}\big\}$ for $\tau_1,\dots,\tau_m\in \mathcal{T}$. 
\end{definition}

    By the definition of internal direct sums,  $\Gamma_{k-1}+\mathcal{L}_\tau\Gamma_{k-1}$ necessarily contains both $\Gamma_{k-1}$ and $\mathcal{L}_\tau\Gamma_{k-1}$ as submodules. Therefore, EBIF generates an ascending chain of submodules of $\langle C^\infty(\mathbb{R}^n)\rangle_{\mathbb{A}}$. For any $i>j$, let $\iota_{ij}:\Gamma_{i}\hookrightarrow\Gamma_j$ be the embedding of $\Gamma_i$ into $\Gamma_j$, then the family of embeddings $\{\iota_{ij}\}_{0\leq i\leq j}$ satisfies $\iota_{ii}$ is the identity on $\Gamma_i$ and $\iota_{jk}\circ\iota_{ij}=\iota_{ik}$ for all $i\leq j\leq k$. This implies that the pairs $(\Gamma_i,\iota_{ij})$ form a \emph{direct system} of $\mathbb{A}$-modules, so that the direct limit is well-defined $\mathbb{A}$-module, given by,
    \begin{align}
    \label{eq:EBIF_module}
    \Gamma^{\ast} := \varinjlim \Gamma_k= \bigsqcup_{k=0}^{\infty} \Gamma_k\Big/\sim,
    \end{align}
    referred to $\Gamma^{\ast}$ as the \textbf{EBIF module}, where $\bigsqcup$ denotes the disjoint union and the equivalence relation is defined as $\gamma_i\sim\gamma_j$ if and only if $\iota_{ij}(\gamma_i)=\iota_{jk}(\gamma_j)$ for any $i\leq k$ and $j\leq k$. 
    
    The use of the Lie derivative operations in the generation of the EBIF sequence announces the candidacy for the use of the elements in the EBIF module $\Gamma^*$ as the coordinates for exact bilinearization. Because the exactly bilinearized system is required to be finite dimensional, a ``finiteness'' property is expected for $\Gamma^{\ast}$, which will be manifested in terms of the Noetherian property. Specifically, $\Gamma^{\ast}$, or generally any $\mathbb{A}$-module, is said to be a \emph{Noetherian $\mathbb{A}$-module}, if it satisfies the \emph{ascending chain condition}, that is, every ascending sequence of submodules of $\Gamma^{\ast}$ stabilizes. This particularly indicates that the EBIF sequence has to stabilize as $\Gamma_0\subseteq\Gamma_1\subseteq\cdots\subseteq\Gamma_{K^*}=\Gamma_{K^*+1}=\cdots=\Gamma^*$ for some integer $K^*$, and hence $\Gamma^*$ is necessarily a finitely generated $\mathbb{A}$-module. This further guarantees the Lie invariance of $\Gamma^*$ as follows.
    



    \begin{corollary}
    \label{thm:Lie_invariance}
        The EBIF module $\Gamma^*$ defined in \eqref{eq:EBIF_module} is $\mathcal{T}$-invariant, provided $\Gamma^*$ is Noetherian.
    \end{corollary}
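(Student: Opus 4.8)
The plan is to use the Noetherian hypothesis to collapse the direct limit $\Gamma^{\ast}$ onto a single term of the EBIF sequence, and then read off the invariance directly from the recursion \eqref{eq:ebif}. First I would record that, because each internal direct sum in \eqref{eq:ebif} contains its summands as submodules, the EBIF sequence is an ascending chain $\Gamma_0\subseteq\Gamma_1\subseteq\cdots$ of submodules of the fixed ambient module $\langle C^\infty(\mathbb{R}^n)\rangle_{\mathbb{A}}$, with every transition map $\iota_{ij}$ a genuine inclusion. Consequently the abstract direct limit in \eqref{eq:EBIF_module} is canonically identified with the internal union $\bigcup_{k\ge 0}\Gamma_k\subseteq\langle C^\infty(\mathbb{R}^n)\rangle_{\mathbb{A}}$ (the equivalence relation $\sim$ merely records that two elements agree once they land in a common $\Gamma_k$), and under this identification the $\Gamma_k$ form an ascending chain of submodules of $\Gamma^{\ast}$ itself.

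Second, since $\Gamma^{\ast}$ is Noetherian, the ascending chain condition applies to $\{\Gamma_k\}$: there is an integer $K^{\ast}$ with $\Gamma_{K^{\ast}}=\Gamma_{K^{\ast}+1}=\cdots$, hence $\Gamma^{\ast}=\Gamma_{K^{\ast}}$. Third, I would evaluate the recursion at index $K^{\ast}+1$,
\[
\Gamma_{K^{\ast}} \;=\; \Gamma_{K^{\ast}+1} \;=\; \Gamma_{K^{\ast}} + \sum_{\tau\in\mathcal{T}}\mathcal{L}_{\tau}\Gamma_{K^{\ast}},
\]
and observe that, because the right-hand side is an internal direct sum containing each $\mathcal{L}_{\tau}\Gamma_{K^{\ast}}$ as a submodule, the equality forces $\mathcal{L}_{\tau}\Gamma_{K^{\ast}}\subseteq\Gamma_{K^{\ast}}$ for every $\tau\in\mathcal{T}$. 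Rewriting $\Gamma_{K^{\ast}}=\Gamma^{\ast}$ yields $\mathcal{L}_{\tau}\Gamma^{\ast}\subseteq\Gamma^{\ast}$ for all $\tau\in\mathcal{T}$, which is exactly $\mathcal{T}$-invariance in the sense of Definition \ref{def:invariant}. I would also note in passing that stabilization additionally makes $\Gamma^{\ast}$ finitely generated, being equal to the finitely generated $\Gamma_{K^{\ast}}$, which is the finiteness needed for a finite-dimensional bilinear representation downstream.

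The only genuinely delicate step is the first one: justifying that the categorical direct limit $\varinjlim\Gamma_k$ coincides with the internal union $\bigcup_k\Gamma_k$, so that the Noetherian hypothesis on $\Gamma^{\ast}$ literally bites on the chain $\{\Gamma_k\}$. This is where one must use that the $\iota_{ij}$ are inclusions of submodules of one fixed ambient module rather than abstract injections; the universal property of the colimit then supplies a module isomorphism onto $\bigcup_k\Gamma_k$, and a short verification confirms that the induced $\mathbb{A}$-module structure is the evident one. Everything after that is a one-line consequence of the EBIF recursion, so I expect this identification to be the main (and essentially only) obstacle.
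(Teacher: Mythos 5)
Your argument is correct and is essentially the paper's own proof, only spelled out in more detail: both use the Noetherian hypothesis to stabilize the ascending EBIF chain at some $\Gamma_{K^\ast}=\Gamma^\ast$ and then read $\mathcal{L}_\tau\Gamma^\ast\subseteq\Gamma^\ast$ directly off the recursion \eqref{eq:ebif}. Your extra care in identifying the direct limit with the internal union $\bigcup_k\Gamma_k$ is a worthwhile clarification the paper leaves implicit, but it does not change the route.
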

    \begin{proof}
        The fact that $\Gamma^*$ is Noetherian implies the chain of submodules $\{\Gamma_k\}_{k\in\mathbb{N}}$ satisfies the ascending chain condition. This suggests that $\Gamma^{\ast} = \Gamma^{\ast} + \sum\nolimits_{\tau\in \mathcal{T}} \mathcal{L}_{\tau} \Gamma^{\ast}$, which implies $\mathcal{L}_{\tau}\Gamma^{\ast}\subseteq \Gamma^{\ast}$.
    \end{proof}

    As a glance at the necessity of the Noetherian property for exact bilinearization, we will show that EBIF modules generated by bilinear systems on $\mathbb{R}^n$ are indeed Noetherian. In such a case, as well as the subsequential discussion, we choose $\mathbb{A}=\mathbb{R}$ so that every component of the EBIF sequence, and hence the EBIF module, is a vector space over $\mathbb{R}$.

\begin{proposition} \label{Prop:bilinear_Noetherian}
   Consider the bilinear system $\Sigma_b$ in \eqref{eq:Sigma_b} whose dynamics is governed by the set of linear vector fields $\mathcal{T}=\{Az, B_1z,\ldots,B_mz\}$ in $\mathfrak{X}(\mathbb{R}^r)$. The EBIF module (vector space) $\Gamma^*$ generated by $\mathcal{T}$ initiated with $\Gamma_0={\rm span}\{ z_1,\dots,z_n\}$ is Noetherian and, in particular, the EBIF sequence $\{\Gamma_k\}_{k\in\mathbb{N}}$ satisfies $\Gamma_k=\Gamma_0$ for all $k\in\mathbb{N}$, where $z=(z_1,\dots,z_n)$ are the coordinate functions on $\mathbb{R}^n$.   
\end{proposition}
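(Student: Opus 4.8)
The plan is to show that the initial module $\Gamma_0={\rm span}\{z_1,\dots,z_n\}$ is already $\mathcal{T}$-invariant, so that the EBIF recursion \eqref{eq:ebif} is stationary from the outset, and then to invoke finite-dimensionality to conclude the Noetherian property. The crux is a one-line computation: $\Gamma_0$ is precisely the $\mathbb{R}$-vector space of homogeneous degree-one functions in the coordinates $z_1,\dots,z_n$, and the Lie derivative along any linear vector field preserves this space.

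Concretely, I would first take an arbitrary constant matrix $M=(m_{jl})$ with associated linear vector field $\tau=Mz=\sum_{j,l}m_{jl}z_l\frac{\partial}{\partial z_j}$ and compute the Lie derivative of a coordinate function,
\[
    \mathcal{L}_\tau z_k=\sum_{j=1}^n\Big(\sum_{l=1}^n m_{jl}z_l\Big)\frac{\partial z_k}{\partial z_j}=\sum_{l=1}^n m_{kl}z_l\ \in\ \Gamma_0 .
\]
Since $\mathcal{L}_\tau$ acts $\mathbb{R}$-linearly on $\Gamma_0$ (as noted right after Definition \ref{def:LtauGamma}), it follows that $\mathcal{L}_\tau\Gamma_0\subseteq\Gamma_0$. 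Applying this with $\tau=Az$ and $\tau=B_iz$, $i=1,\dots,m$, shows $\Gamma_0$ is $\mathcal{T}$-invariant. Hence every generator of every summand in $\Gamma_1=\Gamma_0+\sum_{\tau\in\mathcal{T}}\mathcal{L}_\tau\Gamma_0$ already lies in $\Gamma_0$, so $\Gamma_1=\Gamma_0$; an immediate induction on $k$ gives $\Gamma_k=\Gamma_0$ for all $k\in\mathbb{N}$.

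Finally, the direct limit collapses to $\Gamma^*=\varinjlim\Gamma_k=\Gamma_0$, an $\mathbb{R}$-vector space of dimension $n$. Any finite-dimensional vector space over a field is a Noetherian module over that field, since an ascending chain of subspaces has non-decreasing dimension bounded by $n$ and therefore stabilizes in at most $n$ steps; thus $\Gamma^*$ is Noetherian (and $\mathcal{T}$-invariant, consistently with Corollary \ref{thm:Lie_invariance}). I do not expect a genuine obstacle in this argument; the only point needing care is the bookkeeping around $\Gamma_0$: the statement relies essentially on $\Gamma_0$ containing \emph{all} $n$ coordinate functions, because $\mathcal{L}_\tau z_k$ is in general a linear combination of every coordinate, so that if $\Gamma_0$ were only a proper subspace of the space of linear functions, the invariance — and hence the stationarity — could fail.
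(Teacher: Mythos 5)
Your proposal is correct and follows essentially the same route as the paper: compute $\mathcal{L}_{Az}z_i=\sum_j a_{ij}z_j\in\Gamma_0$ (and likewise for each $B_iz$) to get $\mathcal{T}$-invariance of $\Gamma_0$, conclude $\Gamma_k=\Gamma_0$ by induction, and deduce the Noetherian property from finite-dimensionality of $\Gamma^*=\Gamma_0$. Your closing remark that the argument hinges on $\Gamma_0$ containing all the coordinate functions is a useful observation, but it does not change the substance of the proof.
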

\begin{proof}
   By the definition of Lie derivatives, we have $\mathcal{L}_{Az}z_i=\nabla z_i\cdot Az=\sum_{j=1}^na_{ij}z_j\in \Gamma_0$ for any $i=1,\dots,n$, where $a_{ij}$ denotes the $(i,j)$-entry of $A$ for $i,j=1,\dots,n$. This yields $\Gamma_0$ is $Az$-invariant. Similar calculations show the $B_jz$-invariance of $\Gamma_0$ for any $j=1,\dots,n$. Therefore, we obtain $\mathcal{L}_{\mathcal{T}}\Gamma_0\subseteq\Gamma_0$ so that $\Gamma_1=\Gamma_0$. Inductively, we conclude $\Gamma_k=\Gamma_0$ for all $k$, which implies $\Gamma^*=\Gamma_0$ is an $n$-dimensional vector space over $\mathbb{R}$. Therefore, all of the vector spaces of $\Gamma^*$ are finite-dimensional, concluding that $\Gamma^*$ is Noetherian. 
\end{proof}


\subsection{Necessary and sufficient conditions for exact bilinearization}

Motivated by Proposition \ref{Prop:bilinear_Noetherian}, in this section, the Noetherian property will be further employed to establish necessary and sufficient exact bilinearization conditions.   



\begin{theorem}[Main theorem] \label{thm:suffcond_exactbilinear}
        Given the control-affine nonlinear system $\Sigma$ in \eqref{eq:Sigma} defined on $\mathbb{R}^n$, let $\{\Gamma_k\}_{k\in\mathbb{N}}$ be an EBIF sequence generated by the set of vector field $\mathcal{T}=\{f,g_1,\dots,g_m\}\subset\mathfrak{X}(\mathbb{R}^n)$ governing the system dynamics, then the system $\Sigma$ is exactly bilineariable if and only if the EBIF module $\Gamma^*=\varinjlim\Gamma_k$ satisfies 
        \begin{enumerate}
        \item[(1)] $\Gamma^*$ is Noetherian, equivalently, a finite-dimensional vector subspace of $C^\infty(\mathbb{R}^n)$. 
        \item[(2)] The map $\Psi:\mathbb{R}^n\rightarrow\mathbb{R}^r$ given by $x\mapsto (\gamma_1(x), \dots, \gamma_r(x))$ is a smooth embedding for a basis $\{\gamma_1,\dots,\gamma_r\}$ of $\Gamma^*$. 
        \end{enumerate}
\end{theorem}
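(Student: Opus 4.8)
The plan is to prove the two directions separately, with the $\Psi$-related structure from Proposition \ref{prop:psirelated} and the coordinate formula \eqref{eq:bilinear_coordinate} serving as the bridge in both directions.

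For the \emph{necessity} direction, suppose $\Sigma$ is exactly bilinearizable via a smooth embedding $\Psi = (\Psi_1,\dots,\Psi_r)$ into the bilinear system $\Sigma_b$ in \eqref{eq:Sigma_b}. First I would show that the finite-dimensional $\mathbb{R}$-vector space $V := \mathrm{span}_{\mathbb{R}}\{\Psi_1,\dots,\Psi_r\}\subseteq C^\infty(\mathbb{R}^n)$ is $\mathcal{T}$-invariant. This follows from Proposition \ref{prop:psirelated}: the $\Psi$-relatedness of $f$ and $\bar f = Az$ gives, componentwise via \eqref{eq:bilinear_coordinate}, $\mathcal{L}_f\Psi_j = \sum_{l} a_{jl}\Psi_l \in V$, and similarly $\mathcal{L}_{g_i}\Psi_j = \sum_l b^i_{jl}\Psi_l \in V$ for each $i,j$. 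Hence $\mathcal{L}_\tau V \subseteq V$ for every $\tau\in\mathcal{T}$. Now, because Example \ref{ex:unicycle_derive} and the construction permit it, I take the initial module $\Gamma_0 = \mathrm{span}_{\mathbb{R}}\{x_1,\dots,x_n\}$ (the coordinate functions on $\mathbb{R}^n$); since $\Psi$ is an embedding, its components must in particular separate points and its differential must be injective, but more directly one observes that the $x_l$'s themselves must lie in $\Gamma^*$ — this is the subtle point I address below. Granting $x_1,\dots,x_n \in V$ (which I will arrange by taking $\Gamma_0$ to be generated by the coordinate functions and using that $V$ is $\mathcal{T}$-invariant and contains $\Gamma_0$ after possibly enlarging $\Psi$ by the coordinate functions, as the remark's augmentation trick allows), an easy induction on \eqref{eq:ebif} shows $\Gamma_k \subseteq V$ for all $k$: if $\Gamma_{k-1}\subseteq V$ then $\mathcal{L}_\tau\Gamma_{k-1}\subseteq \mathcal{L}_\tau V\subseteq V$, so $\Gamma_k = \Gamma_{k-1}+\sum_\tau \mathcal{L}_\tau\Gamma_{k-1}\subseteq V$. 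Therefore every $\Gamma_k$ is a subspace of the finite-dimensional space $V$, the ascending chain $\Gamma_0\subseteq\Gamma_1\subseteq\cdots$ stabilizes by dimension count, and $\Gamma^*$ is finite-dimensional, i.e. Noetherian — this is (1). For (2), I would argue that once $\Gamma^*$ is finite-dimensional with basis $\{\gamma_1,\dots,\gamma_r\}$, the map $x\mapsto(\gamma_1(x),\dots,\gamma_r(x))$ is a smooth embedding: smoothness is clear; the embedding property follows because $\Gamma^*$ contains the coordinate functions $x_1,\dots,x_n$ (so $\Psi$ is injective with injective differential, as the $x_l$ appear as linear combinations of the $\gamma_j$), and on $\mathbb{R}^n$ a proper injective immersion is an embedding — properness here is handled by noting the coordinate projection $\mathbb{R}^r\to\mathbb{R}^n$ onto the $x$-components is a left inverse of $\Psi$, making $\Psi$ a homeomorphism onto its image.

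For the \emph{sufficiency} direction, assume (1) and (2). By Corollary \ref{thm:Lie_invariance}, since $\Gamma^*$ is Noetherian it is $\mathcal{T}$-invariant: $\mathcal{L}_\tau\Gamma^*\subseteq\Gamma^*$ for $\tau\in\{f,g_1,\dots,g_m\}$. Fix the basis $\{\gamma_1,\dots,\gamma_r\}$ of $\Gamma^*$ from (2) and define $\Psi = (\gamma_1,\dots,\gamma_r)$, a smooth embedding by hypothesis. Invariance gives constants $a_{jl}, b^i_{jl}\in\mathbb{R}$ with $\mathcal{L}_f\gamma_j = \sum_{l=1}^r a_{jl}\gamma_l$ and $\mathcal{L}_{g_i}\gamma_j = \sum_{l=1}^r b^i_{jl}\gamma_l$. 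Setting $z(t) = \Psi(x(t))$ and $A = (a_{jl})$, $B_i = (b^i_{jl})$, the chain-rule computation in the proof of Proposition \ref{prop:psirelated} (equation \eqref{eq:bilinear_derivation}) gives $\dot z_j = \mathcal{L}_f\gamma_j(x) + \sum_i u_i\,\mathcal{L}_{g_i}\gamma_j(x) = \sum_l a_{jl}z_l + \sum_i u_i\sum_l b^i_{jl}z_l$, i.e. $\dot z = Az + \sum_i u_i B_i z$, which is exactly $\Sigma_b$ in \eqref{eq:Sigma_b} with $n<r<\infty$. Hence $\Sigma$ is exactly bilinearizable.

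\textbf{Main obstacle.} I expect the delicate part to be pinning down exactly which initial module $\Gamma_0$ the theorem intends and ensuring the coordinate functions $x_1,\dots,x_n$ survive into $\Gamma^*$ so that (2)'s map is genuinely an \emph{embedding} (injective immersion that is a homeomorphism onto its image) and not merely an immersion. In the necessity direction one must show the $x_l$ lie in $\Gamma^*$ — the cleanest route is to insist $\Gamma_0 \supseteq \mathrm{span}\{x_1,\dots,x_n\}$ and to verify this is compatible with the finite-dimensional target space $V$ obtained from a given bilinearization (enlarging $\Psi$ by appending the identity coordinates, which is harmless by the remark following Definition \ref{def:exactbilinearizable}). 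In the sufficiency direction the embedding claim is an outright hypothesis, so the only care needed is the bookkeeping that $\Gamma^*$ being a \emph{vector space} (choice $\mathbb{A}=\mathbb{R}$) makes "finitely generated over $\mathbb{R}$" and "finite-dimensional" coincide, and that $\mathcal{T}$-invariance of $\Gamma^*$ produces a \emph{single} global choice of the matrices $A,B_i$ — both of which follow from Corollary \ref{thm:Lie_invariance} and linear algebra. A secondary routine point is checking that the internal-direct-sum bookkeeping in \eqref{eq:ebif} does not affect the span (the direct-sum decoration tracks generators, not the underlying subspace of $C^\infty(\mathbb{R}^n)$), so that "$\Gamma^*$ as a subspace of $C^\infty(\mathbb{R}^n)$" is unambiguous.
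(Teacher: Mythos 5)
Your sufficiency argument is correct and essentially identical to the paper's: Corollary \ref{thm:Lie_invariance} supplies $\mathcal{T}$-invariance of $\Gamma^*$, the Lie derivatives of the basis functions expand with constant coefficients to define $A$ and $B_i$, the chain rule yields the bilinear dynamics, and condition (2) provides the embedding. That half needs no repair.

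The necessity direction, however, has a genuine gap exactly at the point you flagged as ``the subtle point.'' The paper avoids the issue entirely by initializing the EBIF sequence with $\Gamma_0=\mathrm{span}\{\Psi_1,\dots,\Psi_r\}$, the components of the \emph{given} bilinearizing embedding: by Proposition \ref{prop:psirelated}, $\mathcal{L}_f\Psi_j=\sum_l a_{jl}\Psi_l$ and $\mathcal{L}_{g_i}\Psi_j=\sum_l b^i_{jl}\Psi_l$, so this $\Gamma_0$ is already $\mathcal{T}$-invariant, the sequence is constant, $\Gamma^*=\Gamma_0$ is finite-dimensional, and the basis map is $\Psi$ up to an invertible linear change of basis, hence an embedding. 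You instead insist on $\Gamma_0=\mathrm{span}\{x_1,\dots,x_n\}$ and need $x_1,\dots,x_n\in V:=\mathrm{span}\{\Psi_1,\dots,\Psi_r\}$ to run the induction $\Gamma_k\subseteq V$. Nothing in Definition \ref{def:exactbilinearizable} guarantees this, and your proposed repair --- appending the identity coordinates to $\Psi$ --- fails: adjoining $x_l$ to the invariant space forces $\mathcal{L}_f x_l=f_l$ and $\mathcal{L}_{g_i}x_l=g_{il}$ into it as well, and these are the generally nonlinear components of the vector fields; demanding that they be linear combinations of the augmented coordinates is precisely the bilinearizability claim for that enlarged $\Gamma_0$, so the argument is circular. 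The remark following Definition \ref{def:exactbilinearizable} only licenses appending the constant function $1$ to absorb affine offsets $D_i$, not arbitrary coordinate functions, so the very first step $\mathcal{L}_\tau\Gamma_0\subseteq V$ of your induction can fail. The theorem leaves $\Gamma_0$ unspecified, and the paper's necessity proof is existential in $\Gamma_0$; adopting that reading, your own invariance computation for $V$ already closes the argument in one line.
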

\begin{proof}
            (\textit{Necessity}) Suppose that the system $\Sigma$ in \eqref{eq:Sigma} can be exactly bilinearized to the system $\Sigma_b$ in \eqref{eq:Sigma_b} by a smooth embedding $\Psi:\mathbb{R}^n\rightarrow\mathbb{R}^r$. Let $\Gamma_0={\rm span}\{\gamma_1,\dots,\gamma_r\}$ with $\gamma_i\in C^\infty(\mathbb{R}^n)$ the $i^{\rm th}$ component of $\Psi$ for each $i=1,\dots,r$, then Proposition \ref{Prop:bilinear_Noetherian} together with the $\Psi$-related property in Proposition \ref{prop:psirelated} shows that $\Gamma^*=\Gamma_0$. Therefore, both Conditions (1) and (2) are satisfied.

            (\textit{Sufficiency}) Suppose Conditions (1) and (2) hold, we will construct a smooth embedding which exactly bilinearizes the system $\Sigma$. 
            Because $\Gamma^*$ is a finite-dimensional vector space, there is a finite basis $\{\gamma_1,\dots, \gamma_r\}$. We define a function $\Psi:\mathbb{R}^n\rightarrow\mathbb{R}^r$ by $x=(x_1,\dots,x_n)\mapsto z=(z_1,\dots,z_r)=(\gamma_1(x),\dots,\gamma_r(x))$. The $z$-coordinate representation of the system $\Sigma$ is given by
            \begin{align*}
                \dot{z}_j(t) = \dot{\gamma}_j(t) = \mathcal{L}_{f} \gamma_j(x(t)) + \sum\nolimits_{i=1}^{m} u_i(t) \mathcal{L}_{g_i} \gamma_j(x(t)),
            \end{align*}
            for $j=1,\dots,r$. The $\mathcal{T}$-invariance property of $\Gamma^*$ proved in Corollary \ref{thm:Lie_invariance} indicates $\mathcal{L}_{f} \gamma_j,\mathcal{L}_{g_i} \gamma_j\in\Gamma^*$ so that $\mathcal{L}_{f} \gamma_j=\sum_{k=1}^ra_{jk}\gamma_k$ and $\mathcal{L}_{g_i} \gamma_j=\sum_{k=1}^rb^i_{jk}\gamma_k$ for some $a_{jk},b^i_{jk}\in\mathbb{R}$. This yields
            \begin{align*}
                \dot{z}_j(t) &= \sum\nolimits_{k=1}^ra_{jk}\gamma_k(x(t)) \\
                &\qquad\qquad\quad + \sum\nolimits_{i=1}^{m} u_i(t) \Big(\sum\nolimits_{k=1}^{r}b^i_{jk}\gamma_k(x(t))\Big),\\
                &=\sum\nolimits_{k=1}^ra_{jk}z_k(t)+\sum\nolimits_{i=1}^{m} u_i(t) \Big(\sum\nolimits_{k=1}^{r}b^i_{jk}z_k(t)\Big),
            \end{align*}
            which gives rise to a bilinear system defined on $\mathbb{R}^r$. Since $\psi$ is a smooth embedding by Condition (2), the above system is an exact bilinearization of the system $\Sigma$. 
\end{proof}

\begin{remark}
    This theorem can be readily extended to control-affine systems defined on $\mathbb{C}^n$. In the complex setting, the vector fields $\{f,g_1,\dots,g_n\}$ governing the dynamic of $\Sigma$ are analytic and the control inputs are complex-valued. Consequently, the EBIF sequence consists of finite-dimensional vector subspaces of the space of entire functions defined on $\mathbb{C}^n$. In this case, the necessary and sufficient exact bilinearization conditions shown in Theorem \ref{thm:suffcond_exactbilinear} condition remains valid with $\mathbb{R}^n$ replaced by $\mathbb{C}^n$. 
\end{remark}

As shown in the proof of Theorem \ref{thm:suffcond_exactbilinear}, the exact bilinearization map is constructed from the EBIF module $\Gamma^*$. As the direct limit of an EBIF sequence $\{\Gamma_k\}_{k\in\mathbb{N}}$, $\Gamma^*$ necessarily depends on the initial module $\Gamma_0$. Therefore, different choices of $\Gamma_0$ may result in different $\Gamma^*$, leading to different exactly bilinearized systems, as illustrated in the following example.

\begin{example} \label{ex:ebif}
    To verify the exact bilinearization conditions proposed in Theorem \ref{thm:suffcond_exactbilinear} along with the investigation into the impact of the initial module in the EBIF sequence on the bilinearized systems, we revisit the unicycle in Example \ref{ex:unicycle_derive}. The drift and control vector fields of the system are given by $g_1(x) = ( \cos x_3(t) , \sin x_3(t), 0)^{'}$ and $g_2(x) = (0, 0, 1)^{'}$, respectively. 
    We first choose $\Gamma_0={\rm span}\{x_1,x_2,x_3\}$, then taking successive Lie derivatives of functions in $\Gamma_0$ with respect to the vector fields $g_1$ and $g_2$ yields
    \begin{align*}
        \Gamma_1 &= \Gamma_0 + \mathcal{L}_{g_1}\Gamma_0+ \mathcal{L}_{g_2}\Gamma_0 \\
        &={\rm span}\big\{ x_1,x_2,x_3,\cos(x_3),\sin(x_3),1 \big\}\\
        \Gamma_2 &= \Gamma_1 + \mathcal{L}_{g_1}\Gamma_1+ \mathcal{L}_{g_2}\Gamma_1 \\
        &={\rm span}\big\{ x_1,x_2,x_3,\cos(x_3),\sin(x_3),1 \big\}=\Gamma_1.
    \end{align*}
    We observe that the EBIF sequence stabilizes at $\Gamma_1$ and hence $\Gamma^*=\Gamma_1$ is a 6-dimensional vector subspace of $C^\infty(\mathbb{R}^3)$, verifying Condition (1). On the other hand, the map $\Psi:\mathbb{R}^3\rightarrow\mathbb{R}^6$, constituting the basis of $\Gamma^*$ as $(x_1,x_2,x_3)\mapsto(x_1,x_2,x_3,\cos x_3,\sin x_3,1)$, is the graph of the smooth function $\psi:\mathbb{R}^3\rightarrow\mathbb{R}^3$, $(x_1,x_2,x_3)\mapsto(\cos x_3,\sin x_3,1)$. Therefore, $\Psi$ is necessarily a smooth embedding (see Corollary \ref{cor:EBIF} below for the proof) so that Condition (2) is also satisfied. This qualifies $\Psi$ as an exact bilinearization map for the unicycle, which is consistent with the calculation in Example \ref{ex:unicycle_derive}.

    Alternatively, if we pick $\widetilde \Gamma_0={\rm span}\{x_1,x_2,\cos x_3,\sin x_3\}$, then $\widetilde\Gamma_0=\widetilde\Gamma_0+\mathcal{L}_{g_1}\widetilde\Gamma_0+ \mathcal{L}_{g_2}\widetilde\Gamma_0$ holds so that $\widetilde\Gamma^*=\widetilde\Gamma_0$ is a 4-dimensional vector space. To show the map $\widetilde\Psi:\mathbb{R}^3\rightarrow\mathbb{R}^4$, $(x_1,x_2,x_3)\mapsto(x_1,x_2,\cos x_3,\sin x_3)$ is a smooth embedding, we note that the last two components $(\cos x_3,\sin x_3)$ give rise to an embedding of the unit circle $\mathbb{S}^1$ into $\mathbb{R}^2$ with $x_3$ served as the angle coordinate. Therefore, $\widetilde\Psi$ defines a global coordinate chart of $\mathbb{R}^2\times\mathbb{S}^1$ as a smooth manifold embedded in $\mathbb{R}^4$, and hence generically a smooth embedding. The corresponding bilinearized system is 
    \begin{align*}
        \dot z=u_1(t)\left[\begin{array}{cccc} 0 & 0 & 1 & 0 \\ 0 & 0 & 0 & 1 \\ 0 & 0 & 0 & 0 \\ 0 & 0 & 0 & 0 \end{array}\right]z+u_2(t)\left[\begin{array}{cccc} 0 & 0 & 0 & 0 \\ 0 & 0 & 0 & 0 \\ 0 & 0 & 0 & -1 \\ 0 & 0 & 1 & 0 \end{array}\right]z,
    \end{align*}
    which is a subsystem of the bilinear system derived from $\Psi$ in Example \ref{ex:unicycle_derive}. This is because the basis of $\widetilde\Gamma^*$ is a subset of that of $\Gamma^*$. 

    Although $\Gamma_0$, constructed by using the coordinate functions of $\mathbb{R}^n$, does not generate a ``minimal bilinearization'', it provides a general guidance on the choice of the initial module in the EBIF sequence. This in turn gives rises to a sufficient exactly bilinearization condition.

\end{example}

\begin{corollary}\label{cor:EBIF}
    Consider the nonlinear control-affine system $\Sigma$. Let $\Gamma_0={\rm span}\{x_1,\dots,x_n\}$, the subspace of $C^\infty(\mathbb{R}^n)$ spanned by the coordinate functions of $\mathbb{R}^n$, be the initial module in the EBIF sequence $\{\Gamma_k\}_{k\in\mathbb{N}}$, then $\Sigma$ is exactly bilinearizable if the EBIF module $\Gamma^*=\varinjlim \Gamma_k$ is Noetherian. 
\end{corollary}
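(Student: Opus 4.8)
The plan is to deduce the result directly from the Main Theorem (Theorem~\ref{thm:suffcond_exactbilinear}) by verifying its two hypotheses. Condition~(1) there is precisely the Noetherian property of $\Gamma^*$, which is the standing assumption, so the whole content of the proof is to check Condition~(2): that the coordinate map associated with \emph{some} basis of $\Gamma^*$ is a smooth embedding. The key structural remark is that the EBIF sequence is ascending, hence $\Gamma_0={\rm span}\{x_1,\dots,x_n\}\subseteq\Gamma^*$; since $\Gamma^*$ is finite dimensional, the linearly independent set $\{x_1,\dots,x_n\}$ can be completed to a basis $\{x_1,\dots,x_n,\gamma_{n+1},\dots,\gamma_r\}$ of $\Gamma^*$.

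First I would fix this particular basis. Then the map $\Psi:\mathbb{R}^n\to\mathbb{R}^r$ produced by Theorem~\ref{thm:suffcond_exactbilinear} reads $\Psi(x)=(x_1,\dots,x_n,\gamma_{n+1}(x),\dots,\gamma_r(x))$, i.e.\ $\Psi$ is the graph of the smooth map $\psi:=(\gamma_{n+1},\dots,\gamma_r):\mathbb{R}^n\to\mathbb{R}^{r-n}$. Next I would invoke the standard fact that the graph map of a smooth map is a smooth embedding; the three ingredients are: (i) injectivity, since the first $n$ components of $\Psi$ already separate points of $\mathbb{R}^n$; (ii) the immersion property, since $\frac{\partial\Psi}{\partial x}$ has the $n\times n$ identity matrix as its top block and therefore rank $n$ at every point; and (iii) $\Psi$ being a homeomorphism onto its image, because the linear projection onto the first $n$ coordinates restricts to a continuous inverse of $\Psi$ on $\Psi(\mathbb{R}^n)$. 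With Condition~(2) established, Theorem~\ref{thm:suffcond_exactbilinear} yields that $\Sigma$ is exactly bilinearizable, and the construction in its proof exhibits the bilinear representation explicitly.

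One degenerate case deserves a line: the notion of exact bilinearization requires a strictly higher-dimensional target, so if $\dim\Gamma^*=n$ --- equivalently $\Gamma^*=\Gamma_0$, as two nested subspaces of equal finite dimension must coincide --- then $\Sigma$ is already linear in $x$ and the map above is the identity. In that case I would replace $\Gamma^*$ by $\Gamma^*\oplus{\rm span}\{1\}$, which remains $\mathcal{T}$-invariant because $\mathcal{L}_\tau 1=0$ for every $\tau\in\mathcal{T}$, yielding the graph of the constant map into $\mathbb{R}^{n+1}$; this is still a smooth embedding and still places the dynamics in bilinear form.

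I do not anticipate a genuine obstacle: the statement is a corollary of the Main Theorem, and the only nontrivial-looking requirement, Condition~(2), becomes automatic as soon as the basis of $\Gamma^*$ is chosen to contain the coordinate functions $x_1,\dots,x_n$, so that $\Psi$ is a graph. The only mild care needed is the dimension bookkeeping in the degenerate case and citing the elementary ``graph is an embedding'' lemma from a standard differential-geometry reference such as \cite{Lee_Springer03}.
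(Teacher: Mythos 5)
Your proof is correct and follows essentially the same route as the paper's: reduce to Condition~(2) of Theorem~\ref{thm:suffcond_exactbilinear}, use $\Gamma_0\subseteq\Gamma^*$ to complete $\{x_1,\dots,x_n\}$ to a basis of $\Gamma^*$, and observe that the resulting $\Psi$ is the graph of the smooth map $\psi=(\gamma_{n+1},\dots,\gamma_r)$, hence a smooth embedding (injectivity and a continuous inverse via the coordinate projection, and the immersion property from the identity block in the Jacobian). Your additional handling of the degenerate case $\dim\Gamma^*=n$ by adjoining ${\rm span}\{1\}$ is a small refinement the paper does not address, but it does not alter the argument.
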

\begin{proof}
    It suffices to examine Condition (2) in Theorem \ref{thm:suffcond_exactbilinear}. Because $\Gamma_0\subseteq\Gamma^*$, we have $x_1,\dots,x_n\in\Gamma^*$ as well. The Noetherian assumption of $\Gamma^*$ implies the existence of $\gamma_{n+1},\dots,\gamma_r\in C^\infty(\mathbb{R}^n)$ such that $\Gamma^*={\rm span}\{x_1,\dots,x_n,\gamma_{n+1},\dots,\gamma_r\}$. It then remians to show that $\Psi:\mathbb{R}^n\rightarrow\mathbb{R}^r$, given by $(x_1,\dots,x_n)\mapsto(x_1,\dots,x_n,\gamma_{n+1},\dots,\gamma_r)$ is a smooth embedding, meaning, a topological embedding and an injective immersion. To this end, by defining the smooth map  $\psi:\mathbb{R}^n\rightarrow\mathbb{R}^{r-n}$ as $(x_1,\dots,x_n)\mapsto(\gamma_{n+1},\dots,\gamma_r)$, we have that $\Psi(x)=(x,\psi(x))\in\mathbb{R}^{n}\times\mathbb{R}^{r-n}$ is essentially the graph of $\psi$. Let $\pi:\mathbb{R}^{n}\times\mathbb{R}^{r-n}\rightarrow\mathbb{R}^n$ be the projection onto the $\mathbb{R}^n$-component, then $\pi\circ\Psi(x)=x$ holds for all $x\in\mathbb{R}^n$ so that $\pi$ is an inverse of $\Psi$, which implies that $\Psi$ is a topological embedding. Moreover, the Jacobian of $\Psi$ is given by $\nabla\Psi(x)=[\, I \mid \nabla\psi(x)\,]$ with $I$ the $n$-by-$n$ identity matrix, which is full rank at all $x\in\mathbb{R}^n$. Therefore, $\Psi$ is also an immersion, concluding the proof. 
\end{proof}

\section{EBIF-enabled Reachability Analysis
} 
\label{sec:reachability}

Identifying reachable sets of control systems is of fundamental importance with broad application domains ranging from cybersecurity and robotics to safety verification. However, systematic approaches to analytic computations of reachable sets for nonlinear systems remain underexplored. In this section, we will adopt the proposed EBIF to study reachable sets of exactly bilinearizable systems through their bilineared forms.



To start with, we first emphasize the necessity for the reachability analysis of exactly bilinearized systems by showing that they are never controllable.  

\begin{example} \label{ex:liftingexample}
    Consider the following control-affine nonlinear system described by
    \begin{align*}
        \frac{d}{dt} \begin{pmatrix} x_1(t) \\ x_2(t) \end{pmatrix} = \begin{pmatrix} x_1(t) \\ x_2(t)-x_1^2(t) \end{pmatrix} +\begin{pmatrix} u_1(t)\\u_2(t)\end{pmatrix},
    \end{align*}
    where $x(t) = ( x_1(t), x_2(t) )^{'}\in \mathbb{R}^2$. Clearly, this system is controllable on $\mathbb{R}^2$.
    Applying EBIF with $\Gamma_0 = \text{span}\{ x_1,x_2,x_1^2\}$ leads to an embedding $\Psi: \mathbb{R}^2\hookrightarrow \mathbb{R}^3$ defined by $z(t) = \Psi(x) = (x_1,x_2,x_1^2)^{'}$, with the corresponding bilinear representation given by
    \begin{align*}
        \dot{z}(t) = Az(t) + u_1(t)B_1z(t) +Du(t),
    \end{align*}
    where
    \begin{align*}
        A = \begin{pmatrix} 1 & 0 & 0 \\0&1&-1\\0&0&2  \end{pmatrix},~B_1=\begin{pmatrix} 0 & 0 & 0 \\0&0&0\\2&0&0  \end{pmatrix},~D=\begin{pmatrix} 1 & 0  \\0&1\\0&0  \end{pmatrix}.
    \end{align*}
    However, this bilinearized system is not controllable on $\mathbb{R}^3$ as the point $(0,0,1)'$ is not on the embedded submanifold $\Psi(\mathbb{R}^2)$ where the bilinear system evolves on (see Fig. \ref{fig:reachability}(a)).   
\end{example}
\begin{figure}[!ht]
    \centering
    \includegraphics[width=1\linewidth]{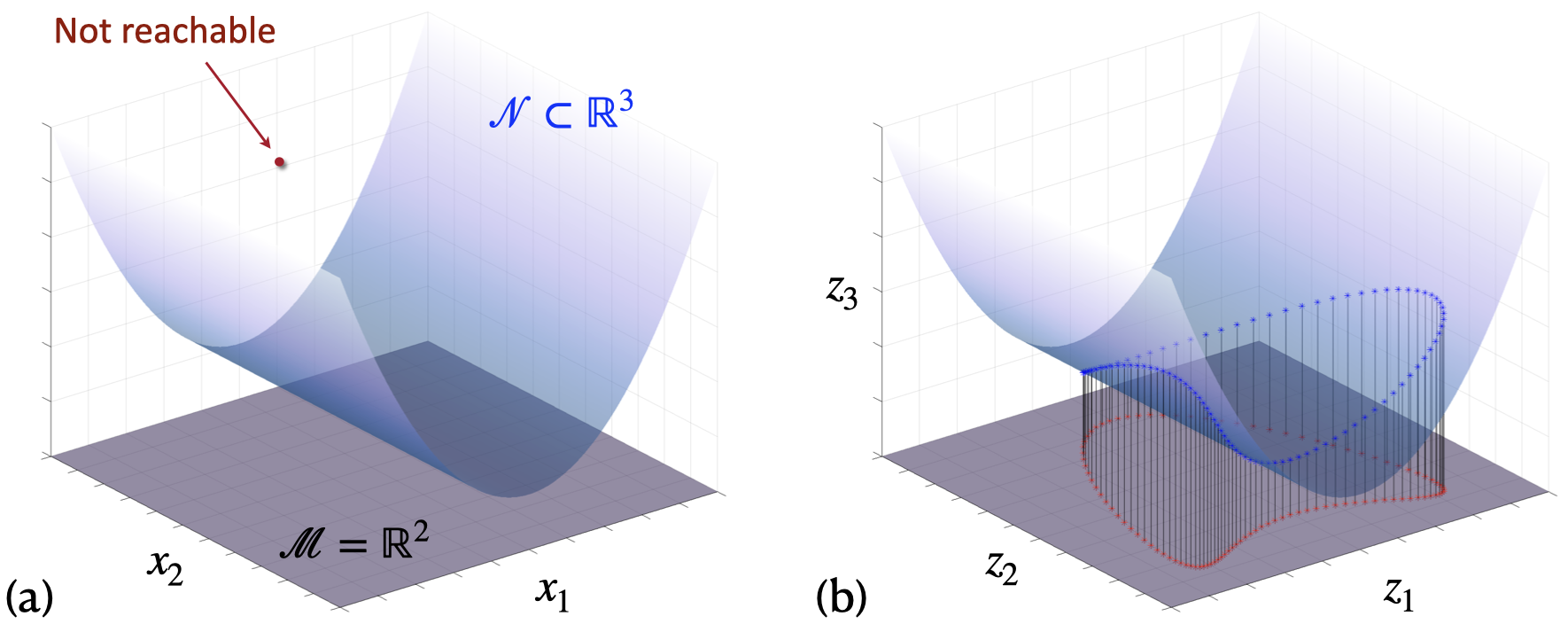}
    \caption{Illustration of (a) the original controllable manifold ${ M } = \mathbb{R}^2$ and the embedded submanifold $N = \Psi(M)\subset\mathbb{R}^3$ for the nonlinear system and its associated bilinear system; and (b) the reachable set at $T=1$ of the nonlinear system (marked in red) obtained by projecting the reachable set of its bilinear system (marked in blue) for the systems in Example \ref{ex:liftingexample} under control input $u\in\mathcal{U} = \{u(t) = C\in\mathbb{R}^2;~\|C\|=2 ;~ t\in[0,1]\}$.}
    \label{fig:reachability}
\end{figure}

Example \ref{ex:liftingexample} draws a general conclusion on uncontrolliabillity of an exactly bilinearized system in the case where the EBIF-embedding is not surjective. For this reason, we focus on reachability, instead of controllability, analysis of the bilinear system and use its reachable set to identify that of the original nonlinear system. Our main idea is to leverage the EBIF-embedding to represent the reachable set of the nonlinear system in terms of the \emph{Myhill semigroup}, that is, the space of flows generated by piecewise constant control inputs with the function composition operation, of the bilinear system. It is renowned that the reachable set of any control system can be identified with its Myhill semigroup \cite{Brockett_SIAM72, Brockett_springer73}. However, the semigroup elements for nonlinear systems generally fail to have closed form representations. A key advantage of bilinear systems is that any element in their Myhill semigroups admits a matrix representation. This then recognizes a major significance of the proposed EBIF framework, enabling a more tractable reachability analysis for nonlinear control systems.

\begin{lemma} \label{thm:reachableset}
   Let $(\Sigma,\Sigma_b)$ be an exact bilinearization pair under the smooth embedding $\Psi:\mathbb{R}^n\rightarrow\mathbb{R}^r$. Then, provided $[\text{ad}_A^k B_i,B_j]=0$ for all $i, j = 1, \ldots, m$ and $k= 0,\ldots,r^2-1$, the reachable set of the nonlinear system $\Sigma$ from any initial condition $x_0\in\mathbb{R}^n$ at any final time $T>0$ is given by
   \begin{align*}
        \mathcal{R}_T(x_0) &= \Big\{\xi\in \mathbb{R}^n \mid \xi = \mathcal{P}_n\big(e^{AT}e^H\Psi(x_0)\big)\text{ for }H\in\mathfrak{h} \Big\},
    \end{align*}
    where $\mathfrak{h}={\rm span}\{{\rm ad}^k_AB_i:i=1,\dots,m\text{ and }k=0,\dots,r^2-1\}$ and  $\mathcal{P}_n:\Psi(\mathbb{R}^n)\to\mathbb{R}^n$ is the left inverse of $\Psi$, i.e., $\mathcal{P}_n\circ\Psi(x) = x$ for all $x\in\mathbb{R}^n$. 
    Furthermore, given any control input $u$, the time-$t$ flows $\Phi_t^x:\mathbb{R}^n\rightarrow\mathbb{R}^n$ and $\Phi^z_t:\mathbb{R}^r\rightarrow\mathbb{R}^r$ of the systems $\Sigma$ and $\Sigma_b$, respectively, satisfy $\Phi_x^z=\mathcal{P}_n\circ\Phi_t^z\circ\Psi$ for all $t\in[0,T]$. 

\end{lemma}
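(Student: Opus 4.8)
The plan is to prove the ``furthermore'' flow-conjugacy claim first (it is elementary and reduces the problem to the bilinear system), then to describe the time-$T$ reachable set of $\Sigma_b$ using the commutativity hypothesis. First, fix an admissible control $u$ and $x_0\in\mathbb{R}^n$, and let $x(t)=\Phi^x_t(x_0)$ solve $\Sigma$. By the chain rule and the $\Psi$-relatedness of $(f,\bar f)$ and of each $(g_i,\bar g_i)$ from Proposition~\ref{prop:psirelated}, the curve $t\mapsto\Psi(x(t))$ solves $\Sigma_b$ with initial value $\Psi(x_0)$; since $\Sigma_b$ is linear in $z$ for fixed $u$, its solutions are unique, so $\Phi^z_t(\Psi(x_0))=\Psi(\Phi^x_t(x_0))$. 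In particular $\Psi(\mathbb{R}^n)$ is invariant under $\Phi^z_t$, and composing on the left with the left inverse $\mathcal{P}_n$ (using $\mathcal{P}_n\circ\Psi=\mathrm{Id}_{\mathbb{R}^n}$) gives $\Phi^x_t=\mathcal{P}_n\circ\Phi^z_t\circ\Psi$. Taking $t=T$ and ranging over all controls, $\mathcal{R}_T(x_0)=\mathcal{P}_n\big(\{\Phi^z_T(\Psi(x_0)):u\text{ admissible}\}\big)$, so it remains to compute the time-$T$ reachable set of the bilinear system from $\Psi(x_0)$, i.e.\ to describe its Myhill semigroup at time $T$.

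Next I would extract the algebraic content of the hypothesis: that $\mathfrak{h}$ is a finite-dimensional \emph{abelian} subspace that is $\text{ad}_A$-invariant. Since $\text{ad}_A$ is a linear operator on the $r^2$-dimensional space $\mathbb{R}^{r\times r}$, Cayley--Hamilton gives $\text{ad}_A^{r^2}=\sum_{k=0}^{r^2-1}c_k\,\text{ad}_A^k$ for scalars $c_k$; applying this to each $B_i$ shows simultaneously that $\mathrm{span}\{\text{ad}_A^kB_i:k\ge0\}=\mathrm{span}\{\text{ad}_A^kB_i:0\le k\le r^2-1\}=\mathfrak{h}$ (hence $\mathfrak{h}$ is $\text{ad}_A$-invariant) and, with the hypothesis $[\text{ad}_A^kB_i,B_j]=0$ for $k\le r^2-1$, that $[\text{ad}_A^kB_i,B_j]=0$ for \emph{all} $k\ge0$ by induction. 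A second induction, repeatedly applying $\text{ad}_A$ and using $\text{ad}_A[X,Y]=[\text{ad}_AX,Y]+[X,\text{ad}_AY]$, then yields $[\text{ad}_A^kB_i,\text{ad}_A^lB_j]=0$ for all $k,l\ge0$; so $\mathfrak{h}$ is abelian (indeed an abelian ideal of the solvable Lie algebra $\mathbb{R}A+\mathfrak{h}$ generated by $\{A,B_1,\dots,B_m\}$).

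Then I would compute the transition matrix $\Phi(t)$ of $\Sigma_b$ (so the time-$T$ endpoint from $\Psi(x_0)$ is $\Phi(T)\Psi(x_0)$) in the interaction picture: the substitution $\Phi(t)=e^{tA}\Theta(t)$ turns $\dot\Phi=(A+\sum_iu_iB_i)\Phi$, $\Phi(0)=I$, into $\dot\Theta=\big(\sum_iu_i(t)\widehat B_i(t)\big)\Theta$, $\Theta(0)=I$, where $\widehat B_i(t):=e^{-tA}B_ie^{tA}=e^{-t\,\text{ad}_A}B_i\in\mathfrak{h}$ by $\text{ad}_A$-invariance. Because $\mathfrak{h}$ is abelian, the generators $\widehat B_i(t)$ commute for all $i$ and all times, so the chronological exponential collapses to an ordinary one: $\Theta(T)=\exp\!\big(\int_0^T\sum_iu_i(t)\widehat B_i(t)\,dt\big)=e^{H}$ with $H=\int_0^T\sum_iu_i(t)\widehat B_i(t)\,dt\in\mathfrak{h}$, hence $\Phi(T)=e^{TA}e^{H}$. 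For the converse inclusion, the linear map $u\mapsto\int_0^T\sum_iu_i(t)\widehat B_i(t)\,dt$ is onto $\mathfrak{h}$: a functional annihilating its range must vanish on $\widehat B_i(t)$ for a.e.\ $t$, hence identically by real-analyticity in $t$, hence on $\text{ad}_A^kB_i$ for all $k$ (differentiating at $t=0$), i.e.\ on all of $\mathfrak{h}$, forcing it to be zero. Thus $\{\Phi^z_T(\Psi(x_0)):u\}=\{e^{TA}e^{H}\Psi(x_0):H\in\mathfrak{h}\}$, each such point lying in $\Psi(\mathbb{R}^n)$ by the invariance noted above; applying $\mathcal{P}_n$ yields the stated formula for $\mathcal{R}_T(x_0)$.

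The main obstacle is this last computation: the key realization is that passing to the interaction picture replaces the control directions by the $\mathfrak{h}$-valued family $\widehat B_i(t)$, and that the abelian structure of $\mathfrak{h}$ — which is exactly what the finite-order commutativity hypothesis buys via Cayley--Hamilton — forces the time-ordered product defining the bilinear flow to reduce to the closed form $e^{TA}e^{H}$. The accompanying surjectivity onto $\mathfrak{h}$ is the other delicate point; it rests on real-analyticity of $t\mapsto\widehat B_i(t)$ and on the bound $k\le r^2-1$ being precisely enough to capture $\mathfrak{h}=\mathrm{span}\{\text{ad}_A^kB_i:k\ge0\}$. Here the admissible inputs must be taken $\mathbb{R}^m$-valued (e.g.\ piecewise constant, dense in $L^1$, as is standard for Myhill-semigroup descriptions), since a bounded control set would in general shrink the attainable set of exponents to a proper subset of $\mathfrak{h}$.
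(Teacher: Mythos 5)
Your proposal is correct, and its skeleton matches the paper's: lift the dynamics through $\Psi$, characterize the time-$T$ endpoint map of $\Sigma_b$ as $e^{AT}e^{H}$ with $H$ ranging over $\mathfrak{h}$, and project back with $\mathcal{P}_n$. The difference is in how the middle step is handled. The paper treats the bilinear reachability characterization as a black box, citing Lemma~\ref{lemma:brockettreachability} (i.e.\ \cite{Brockett_SIAM72}), whereas you re-derive it: Cayley--Hamilton on $\text{ad}_A$ to upgrade the finite-order hypothesis to full $\text{ad}_A$-invariance and abelianness of $\mathfrak{h}$, the interaction-picture substitution $\Phi=e^{tA}\Theta$ to collapse the chronological exponential, and a duality/analyticity argument for surjectivity of $u\mapsto H$ onto $\mathfrak{h}$. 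This buys two things the paper's proof glosses over. First, your treatment of the ``furthermore'' claim is more careful: you establish $\Phi^z_t\circ\Psi=\Psi\circ\Phi^x_t$ from $\Psi$-relatedness and uniqueness of solutions, whereas the paper simply writes $z(t)=e^{At}e^{H}z_0:=\Phi^z_t(z_0)$ with a time-independent $H$, which is loose for intermediate $t$ (the exponent there is $t$-dependent). Second, you correctly flag that the surjectivity onto $\mathfrak{h}$ requires unconstrained (e.g.\ piecewise-constant $\mathbb{R}^m$-valued) controls — a hypothesis implicit in Brockett's theorem but not stated in the lemma, since $\Sigma$ is introduced with $u(t)\in\mathcal{U}\subset\mathbb{R}^m$ possibly compact. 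The cost of your route is length; the benefit is that it is self-contained and exposes exactly where the hypothesis $[\text{ad}_A^kB_i,B_j]=0$, $k\le r^2-1$, is used.
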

\begin{proof}
    Consider the system defined on ${\rm GL}(r,\mathbb{R})$, the Lie group of invertible $r$-by-$r$ matrices, given by, $\dot\Phi(t)=\big(A+\sum_{i=1}^mu_i(t)B_i\big)\Phi(t)$ with the initial condition $\Phi(0)=I$, then the solution of the bilinear system $\Sigma_b$ satisfies $z(t)=\Phi(t)z(0)$.

    According to Lemma \ref{lemma:brockettreachability} in Appendix, the reachable set of the bilinear system $\Sigma_b$ on $\mathbb{R}^r$

    Consider the matrix differential equation associated with $\Sigma_b$ in $\mathbb{R}^{r\times r}$ in the form of
    $\dot{\Phi}(t) = A\Phi(t) + \sum\nolimits_{i=1}^{m} u_i(t) B_i\Phi(t)$ 
    with $\Phi(0) = I$. According to Lemma \ref{lemma:brockettreachability} in Appendix, a state $z(T)$ is reachable from $z(0)$ at $T>0$ for $\Sigma_b$ if it can be expressed as $z(T) = \Phi(T)z_0 = e^{AT}e^H\Phi(0)z_0$ for some $H \in \text{span}\{ \text{ad}_A^k B_i\}$. Since $z=\Psi(x)$, it follows that $\Psi(x(T)) = e^{AT}e^H\Psi(x_0)$ and thus the reachable states of the original system at final time $T$ take the form
    \begin{align*}
        x(T) = \mathcal{P}_n( \Psi(x(T)) ) = \mathcal{P}_n( e^{AT}e^H\Psi(x_0) ).
    \end{align*}
    Finally, the solution to the bilinear system $\Sigma_b$ admits the flow as $z(t) = e^{At}e^H z_0 := \Phi_t^z(z_0)$. This result together with the above representation of reachable state at time $t$ yields the desired result.
\end{proof}

The result of Lemma \ref{thm:reachableset} suggests that the reachable set of the original nonlinear system can be identified by first finding the reachable set of its associated (exact) bilinear system and then considering the associated ``projection'' to obtain the reachable set in the original system.
To illustrate and visualize this process, an example of finding the reachable set of the same nonlinear system given in Example \ref{ex:liftingexample} is provided in Fig. \ref{fig:reachability}(b) under the admissible control set $\mathcal{U} = \{u: u(t) =C\in\mathbb{R}^2 \text{ constant over }[0,1];~ \|C\|= 2; ~t\in[0,1]\}$.
\begin{remark}
    It is worth mentioning that a related result in \cite{Brockett_IEEE76} suggests that, for a general nonlinear control-affine system $\Sigma$ satisfying the \emph{involutive condition}, the reachable set can be expressed as $\mathcal{R}_T(x_0) = \left\{\xi\in\mathbb{R}^n\big| \xi = \{\exp(L_0)\}_G \exp(Tf) x_0 \right\}$.
    In other words, $\Phi_T^x(x_0)= \{\exp(L_0)\}_G \exp(Tf)x_0$. While this expression aligns conceptually with our formulation, the EBIF framework offers a simplification by requiring only matrix Lie brackets for its computation. This highlights the practical advantage of EBIF in making reachability analysis more tractable and computationally accessible.
\end{remark}

This result highlights that one of the major advantages of the proposed EBIF is that the reachable set of a control-affine nonlinear system can be explicitly expressed through the corresponding bilinear system using matrix semigroup.
Moreover, this exact bilinear transformation allows us to build a connection between the controllable manifold of the nonlinear system and the embedded submanifold of its associated bilinear system, establishing the dimension equivalences between these two submanifolds. 
To be more specific, let $\{\tau_1,\ldots,\tau_m\}\subset \mathfrak{X}(\mathbb{R}^n)$ be a family of vector fields on $\mathbb{R}^n$, then we denote the Lie algebra generated by the set of vector fields $\{\tau_1,\ldots,\tau_m\}$ evaluated at $x\in\mathbb{R}^n$ by $\text{Lie}_x\{\tau_1,\ldots,\tau_m\}$, that is, the smallest linear subspace of $T_x\mathbb{R}^n$ which contains $\{\tau_1,\ldots,\tau_m\}$ and is closed under the Lie bracket operation. Utilizing the $\Psi$-related property introduced in Proposition \ref{prop:psirelated}, the dimensional equivalence relation can be characterized through the Lie algebra generated by the set of vector fields as follows. 

\begin{lemma}\label{lemma:dimpreserve2}
    Suppose a control-affine nonlinear system $\Sigma$ is exactly bilinearizable with the corresponding bilinear representation $\Sigma_b$ obtained from a smooth embedding $\Psi:\mathbb{R}^n \to \mathbb{R}^r$. 
    Then it follows that, for all $x\in\mathbb{R}^n$, 
    $$\dim (\text{Lie}_x\{f,g_1,\ldots,g_m\}) = \dim (\text{Lie}_{\Psi(x)}\{\bar f,\bar g_1,\ldots,\bar g_m\}),$$
    where $\{f,g_1,\ldots,g_m\}\subset \mathfrak{X}(\mathbb{R}^n)$ and $\{\bar f,\bar g_1,\ldots,\bar g_m\}\subset \mathfrak{X}(\mathbb{R}^r)$, respectively. 
\end{lemma}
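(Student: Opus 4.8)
The plan is to show that the pushforward $\Psi_*$ restricts to a linear isomorphism between the two Lie algebras evaluated pointwise. The starting point is Proposition \ref{prop:psirelated}, which tells us that $f$ and $\bar f$ are $\Psi$-related, as are $g_i$ and $\bar g_i$ for each $i$. The key classical fact I would invoke is that $\Psi$-relatedness is preserved under the Lie bracket: if $\tau_1 \sim_\Psi \sigma_1$ and $\tau_2 \sim_\Psi \sigma_2$, then $[\tau_1,\tau_2] \sim_\Psi [\sigma_1,\sigma_2]$. Iterating this, every iterated bracket of $\{f,g_1,\dots,g_m\}$ is $\Psi$-related to the corresponding iterated bracket of $\{\bar f,\bar g_1,\dots,\bar g_m\}$, and since $\Psi_*$ is $\mathbb{R}$-linear on each tangent space, $\Psi_*$ maps $\mathrm{Lie}_x\{f,g_1,\dots,g_m\}$ onto $\mathrm{Lie}_{\Psi(x)}\{\bar f,\bar g_1,\dots,\bar g_m\}$ surjectively.

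Next I would establish injectivity of this map. Because $\Psi$ is a smooth embedding, it is in particular an immersion, so its differential $\frac{\partial \Psi}{\partial x}\big|_x : T_x\mathbb{R}^n \to T_{\Psi(x)}\mathbb{R}^r$ is injective at every $x$. Restricting an injective linear map to a subspace keeps it injective, so $\Psi_*$ restricted to $\mathrm{Lie}_x\{f,g_1,\dots,g_m\}$ is injective. Combining surjectivity onto the target Lie algebra with injectivity gives that $\Psi_*$ is a linear isomorphism from $\mathrm{Lie}_x\{f,g_1,\dots,g_m\}$ to $\mathrm{Lie}_{\Psi(x)}\{\bar f,\bar g_1,\dots,\bar g_m\}$, and hence the two spaces have equal dimension.

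The one point that requires a little care — and what I expect to be the main obstacle — is verifying that the image $\Psi_*(\mathrm{Lie}_x\{f,g_1,\dots,g_m\})$ is exactly $\mathrm{Lie}_{\Psi(x)}\{\bar f,\bar g_1,\dots,\bar g_m\}$, rather than merely contained in it. The subtlety is that the evaluation-at-a-point Lie algebra $\mathrm{Lie}_{\Psi(x)}\{\bar f,\bar g_1,\dots,\bar g_m\}$ is spanned by the pointwise values of all iterated brackets of the $\bar f, \bar g_i$; one must check that these spanning vectors all lie in the image of $\Psi_*|_{T_x\mathbb{R}^n}$, which is the tangent space $T_{\Psi(x)}\Psi(\mathbb{R}^n)$ of the embedded submanifold. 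This follows precisely because the $\bar f, \bar g_i$ are $\Psi$-related to vector fields on $\mathbb{R}^n$: their values at $\Psi(x)$, and by bracket-closure the values of all their iterated brackets at $\Psi(x)$, are pushforwards under $\Psi_*$ of the corresponding iterated brackets of $f, g_i$ at $x$, hence lie in $\mathrm{range}(\Psi_*|_x)$. So the set of spanning vectors of the target Lie algebra is literally $\Psi_*$ applied to the set of spanning vectors of the source Lie algebra, and taking spans on both sides (using linearity of $\Psi_*$) gives the equality of the two subspaces. With that equality in hand, injectivity of the immersion finishes the dimension count.

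I would present the argument in the order: (1) recall bracket-compatibility of $\Psi$-relatedness and conclude every iterated bracket of the source generators is $\Psi$-related to the corresponding iterated bracket of the target generators; (2) deduce $\Psi_*\big(\mathrm{Lie}_x\{f,g_1,\dots,g_m\}\big) = \mathrm{Lie}_{\Psi(x)}\{\bar f,\bar g_1,\dots,\bar g_m\}$; (3) invoke that $\Psi$ is an immersion so $\Psi_*|_x$ is injective; (4) conclude $\Psi_*$ restricts to a linear isomorphism between the two Lie algebras, giving the dimension equality.
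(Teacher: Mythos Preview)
Your proposal is correct and follows essentially the same approach as the paper: both rely on the bracket-compatibility of $\Psi$-relatedness (the paper isolates this as Lemma~\ref{lemma:dimpreserve} in the Appendix) to conclude that iterated brackets on the two sides correspond under $\Psi_*$. Your write-up is in fact more complete than the paper's, which simply cites the appendix lemma and asserts the conclusion; you make explicit the two ingredients the paper leaves tacit---that $\Psi_*$ carries the spanning set of $\mathrm{Lie}_x\{f,g_i\}$ onto that of $\mathrm{Lie}_{\Psi(x)}\{\bar f,\bar g_i\}$, and that injectivity of $\Psi_*|_x$ (from the immersion property) then forces equality of dimensions.
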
 
\begin{proof}
    To simplify the notations, let $f:=g_0$ and $\bar f:=\bar g_0$, then it holds that $[\bar{g}_i , \bar{g}_j]_{\Psi(x)} = \Psi_{\ast} [g_i , g_j]_x$ for all $0\leq i\neq j \leq m$ following Lemma \ref{lemma:dimpreserve} in Appendix, which leads to the desired result.
\end{proof}
This result suggests that the dimension of the Lie algebra evaluated at $x\in\mathbb{R}^n$ is equivalent to the dimension of the Lie algebra evaluated at $\Psi(x)\in\mathbb{R}^r$ on the embedded submanifold $\Psi(\mathbb{R}^n)$, and consequently, the dimension of the controllable submanifold is preserved under the smooth embedding $\Psi$.
Furthermore, Lemma \ref{lemma:dimpreserve2} implies that the dimension of the controllable submanifold of the original nonlinear system can be simply obtained via the embedded submanifold of the associated bilinear system with computation involving only matrix Lie brackets operations. To illustrate this concept, we revisit the example presented in Example \ref{ex:unicycle_derive} and \ref{ex:ebif}. In this case, we compute the Lie bracket
\begin{align*}
    [\bar{g}_1, \bar{g}_2]_z = (\sin(x_3),-\cos(x_3),0,0)^{'} = \Psi_{\ast}[g_1,g_2]_x
\end{align*}
where $\bar{g}_i|_z = B_iz$ for $i=1,2$. Then, we observe that the Lie algebra generated by the vector fields $\{\bar{g}_i\}$ has dimension three, i.e., $\dim(\text{Lie}\{\bar{g}_1, \bar{g}_2\}) = 3$. This matches the dimension of the Lie algebra generated by the original nonlinear vector fields $\{g_i\}$, for which $\dim(\text{Lie}\{g_1, g_2\}) = 3$. 

\section{Simulations} \label{sec:simulation}
Leveraging the proposed EBIF framework, the theoretical developments in the preceding sections not only establish a foundation for deriving the bilinear system associated with a given control-affine nonlinear system but also highlight the advantages of utilizing the bilinear representation to analyze the nonlinear system. In this section, we illustrate the applicability and effectiveness of the EBIF framework with various applications, including reachability analysis, stabilization and optimal control design.

\subsection{Reachability analysis of equivalent systems}
In the first scenario, we demonstrate that the reachable sets of the two equivalent systems are identical by showing the evolution of the nonlinear system exactly matches the projection of the trajectory of its associated bilinear system.
To put the idea into a concrete setting, we begin with considering the following control-affine nonlinear system, which frequently arises in Koopman operator-based approaches \cite{Goswami_ARC22, Brunton_Arxiv21, Bevanda_ARC21}, given by
    \begin{align} \label{eq:simulation1}
        \frac{d}{dt} \begin{pmatrix} x_1(t) \\ x_2(t) \end{pmatrix} = \begin{pmatrix}
         \lambda_1 x_1 \\ \lambda_2 x_2+(2\lambda_1-\lambda_2)\lambda_3x_1^2 
         \end{pmatrix}, 
    \end{align}
    where $x(t) = ( x_1(t), x_2(t) )^{'}\in { M } = \mathbb{R}^2$ and $\lambda_i$ are fixed parameters. Note that the system given in Example \ref{ex:liftingexample} is a special case of this generalized system with $\lambda =\{1,1,1\}$. 
    Applying EBIF with $\Gamma_0 = \text{span}\{x_1,x_2,x_1^2\}$ yields an embedding $\Psi: { M }\hookrightarrow { N } \subset\mathbb{R}^3$ defined by $z = \Psi(x) = (x_1,x_2,x_1^2)^{'}$, with the corresponding bilinear representation given by
    \begin{align*}
        \dot{z}(t) = \begin{pmatrix} \lambda_1 & 0 & 0 \\ 0 & \lambda_2 & (2\lambda_1-\lambda_2)\lambda_3\\0 & 0 & 2\lambda_1  \end{pmatrix}z(t) .
    \end{align*}
    This representation suggests that each point on the solution trajectory of \eqref{eq:simulation1} can be identified from the solution to the above linear equation as 
    \begin{align*}
        x(t) &= \begin{pmatrix} 1 & 0 & 0 \\ 0 & 1 & 0 \end{pmatrix} \begin{pmatrix} e^{\lambda_1 t} & 0 & 0 \\ 0 & e^{\lambda_2 t} & \lambda_3(e^{2\lambda_1 t}-e^{\lambda_2 t})\\0 & 0 & e^{2\lambda_1 t}  \end{pmatrix}z(0) \\
        &= \begin{pmatrix} e^{\lambda_1 t} & 0 & 0 \\ 0 & e^{\lambda_2 t} & \lambda_3(e^{2\lambda_1 t}-e^{\lambda_2 t}) \end{pmatrix}\begin{pmatrix} x_1(0) \\ x_2(0) \\ x_1^2(0) \end{pmatrix},
    \end{align*}
    which leads to the explicit solutions
    \begin{align*}
        x_1(t) &= e^{\lambda_1 t} x_1(0),\\
        x_2(t) &= e^{\lambda_2 t} x_2(0) + \lambda_3(e^{2\lambda_1 t}-e^{\lambda_2 t}) x_1^2(0).
    \end{align*}
    These expressions match the exact solution obtained by directly solving \eqref{eq:simulation1}, demonstrating the validity of the EBIF framework. 
    Figure \ref{fig:simulationcompare}(a) illustrates the numerical simulation of the trajectory of the nonlinear system \eqref{eq:simulation1} (highlighted in blue) with the initial condition $x(0) = (0.1, 0.1)^{'}$, alongside the trajectory of its corresponding bilinear system (highlighted in red). 


Additionally, as discussed in Section \ref{subsec:EBIF}, EBIF with different initial choices result in different embeddings and thus different bilinear representations for a given nonlinear systems. In this same example \eqref{eq:simulation1}, applying EBIF with $\tilde\Gamma_0 = \text{span}\{x_1,x_2-\lambda_3x_1^2,x_1^2\}$ yields an alternative embedding $\tilde\Psi: { M }\hookrightarrow { N } \subset\mathbb{R}^3$ defined by $\tilde{z}(t) = \tilde\Psi(x) = (x_1,x_2-\lambda_3x_1^2,x_1^2)^{'}$, with the corresponding bilinear representation $\dot{\tilde{z}}(t) = \tilde{A}\tilde{z}(t)$ and submersion $\tilde{\mathcal{P}}_n$ given by
\begin{align*}
    \tilde{A} = \begin{pmatrix} \lambda_1 & 0 & 0 \\ 0 & \lambda_2 & 0\\0 & 0 & 2\lambda_1  \end{pmatrix}, \quad \tilde{\mathcal{P}}_n(\bar z) = \begin{pmatrix} \tilde z_1 \\ \tilde z_2 + \lambda_3\tilde z_3  \end{pmatrix},
\end{align*}
leading to the exact identical explicit solution given above. 
These results highlight that the bilinear representation of an exact bilinearizable nonlinear system is not unique and emphasize that different embeddings leads to bilinear systems evolving on different embedded submanifolds (see Fig. \ref{fig:simulationcompare}), even though they correspond to the same underlying nonlinear system.

\begin{figure}[!h]
    \centering
    \includegraphics[width=\linewidth]{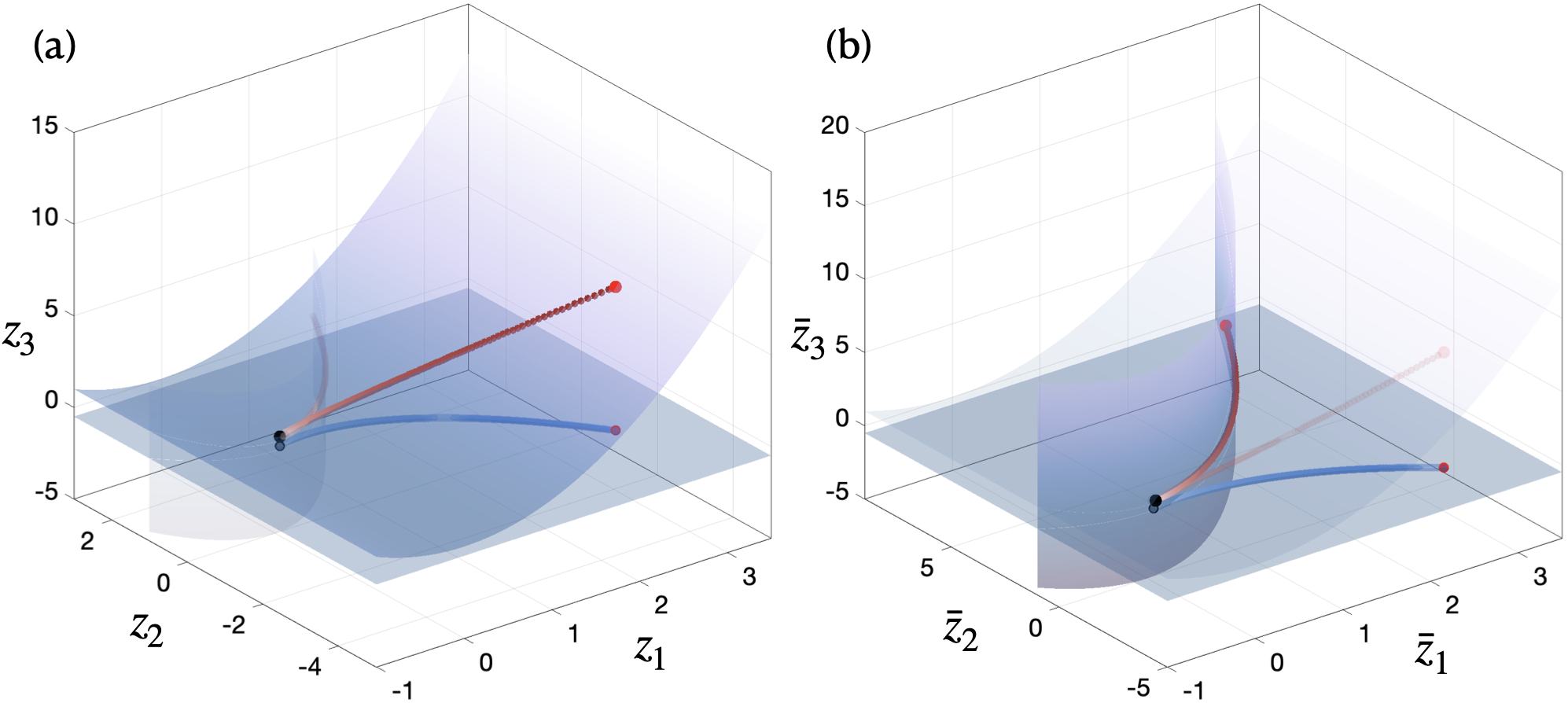}
    \caption{Illustration of the evolution (highlighted in blue) of the nonlinear control-affine system \eqref{eq:simulation1} and the evolution (highlighted in red) of its bilinear representations induced by different embeddings with (a) $\Psi(x) = (x_1,x_2,x_1^2)^{'}$ and (b) $\tilde{\Psi}(x) = (x_1,x_2-\lambda_3x_1^2,x_1^2)^{'}$.}
    \label{fig:simulationcompare}
\end{figure}

Finally, to demonstrate the feasibility of the EBIF framework for more complex systems, we present reachability analysis simulations using CORA \cite{Althoff_CORA15}, which computes reachable sets through zonotope-based evolution. The results are summarized in Figure \ref{fig:simulationtable} with their dynamics given in Table \ref{tab:systems}.

\begin{table*}[!h]
    \centering
    {\renewcommand{\arraystretch}{1.3}
    \begin{tabular}{|c|c|c|c|}
    \hline 
        & Systems dynamic & EBIF & equilibrium\\
    \hline
        (a) & { $\!\begin{aligned} 
        & f(x) = \begin{pmatrix}
        \lambda_1 x_1 \\ \lambda_2 x_2+(2\lambda_1-\lambda_2)\lambda_3x_1^2 
        \end{pmatrix}, \quad g(x) = \begin{pmatrix} 1 & 0\\ x_1^2&1  \end{pmatrix} \end{aligned}$ } & $\Gamma^* = \Gamma_0$ with $\Gamma_0 = \text{span}\{x_1,x_2,x_1^2\}$ & $ x_e = \begin{pmatrix} 0 \\ 0 \end{pmatrix}$ \\
    \hline
        (b) & { $\!\begin{aligned} 
        & f(x) = \begin{pmatrix} \lambda_1 x_2^3 \\ \lambda_2 x_3 \\ \lambda_3 \end{pmatrix} ,\quad  g(x) = \begin{pmatrix} 1 & 0 &0 \\ 0&1&0 \\ 0&0&1  \end{pmatrix} 
        \end{aligned}$ } & $\Gamma^* = \Gamma_4$ with $\Gamma_0 = \text{span}\{x_1,x_2,x_3\}$ & $ x_e = \begin{pmatrix} 0 \\ 0 \\0 \end{pmatrix}$ \\
    \hline
        (c) & { $\!\begin{aligned} 
        & f(x)= \begin{pmatrix} \lambda_1 x_1 \\ \lambda_2 x_2 + \lambda_3\cos(x_4) \\ \lambda_4x_3 +\lambda_5 \exp(x_4) \\ \lambda_6\end{pmatrix},~ g(x) = \begin{pmatrix} x_1 & 0&0&0 \\ 0&x_3&0&0 \\ 0&0&x_2&0 \\ 0&0&0&1  \end{pmatrix} 
        \end{aligned}$ } & $\Gamma^* = \Gamma_2$ with $\Gamma_0 = \text{span}\{x_1,x_2,x_3, x_4\}$ & $ x_e = \begin{pmatrix} 0.0442 \\ 0.0148 \\ -1.0992 \\ -0.4263 \end{pmatrix}$ \\
    \hline
    \end{tabular}
    }
    \caption{Details of nonlinear systems (in the form of $\dot{x}(t) = f(x) + g(x) u(t)$) used in Figure \ref{fig:simulationtable}.}\label{tab:systems}
\end{table*}

\begin{figure*}[!h]
\centering
\resizebox{\textwidth}{!}{
\begin{tabular}{|c|c|c|c|}
\hline
 & (a) & (b) & (c) \\
\hline
\rotatebox[origin=c]{90}{\textbf{reachable set}} & 
\adjincludegraphics[width=.3\linewidth, valign = c]{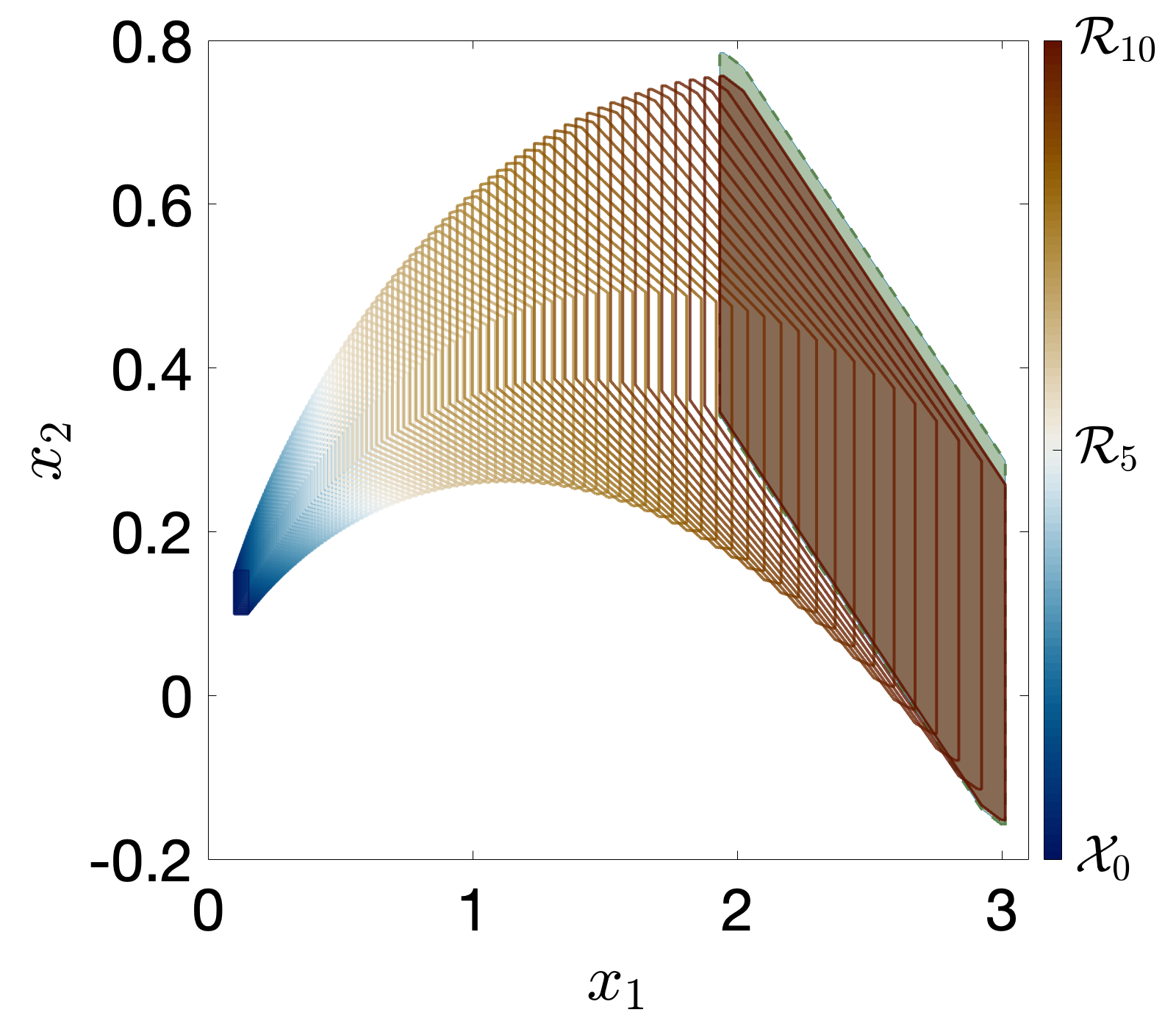} & 
\adjincludegraphics[width=.3\linewidth, valign = c]{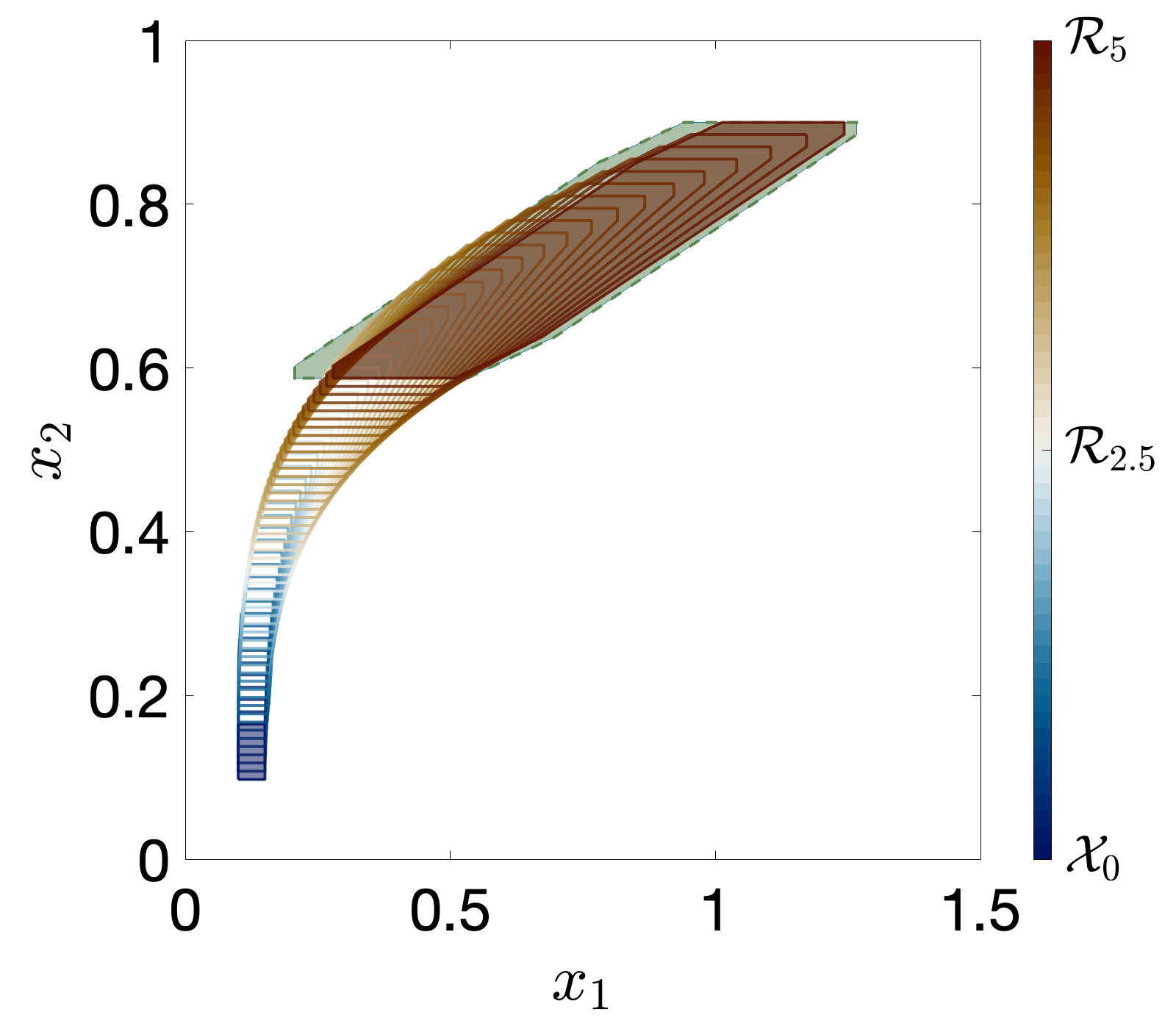} &
\adjincludegraphics[width=.3\linewidth, valign = c]{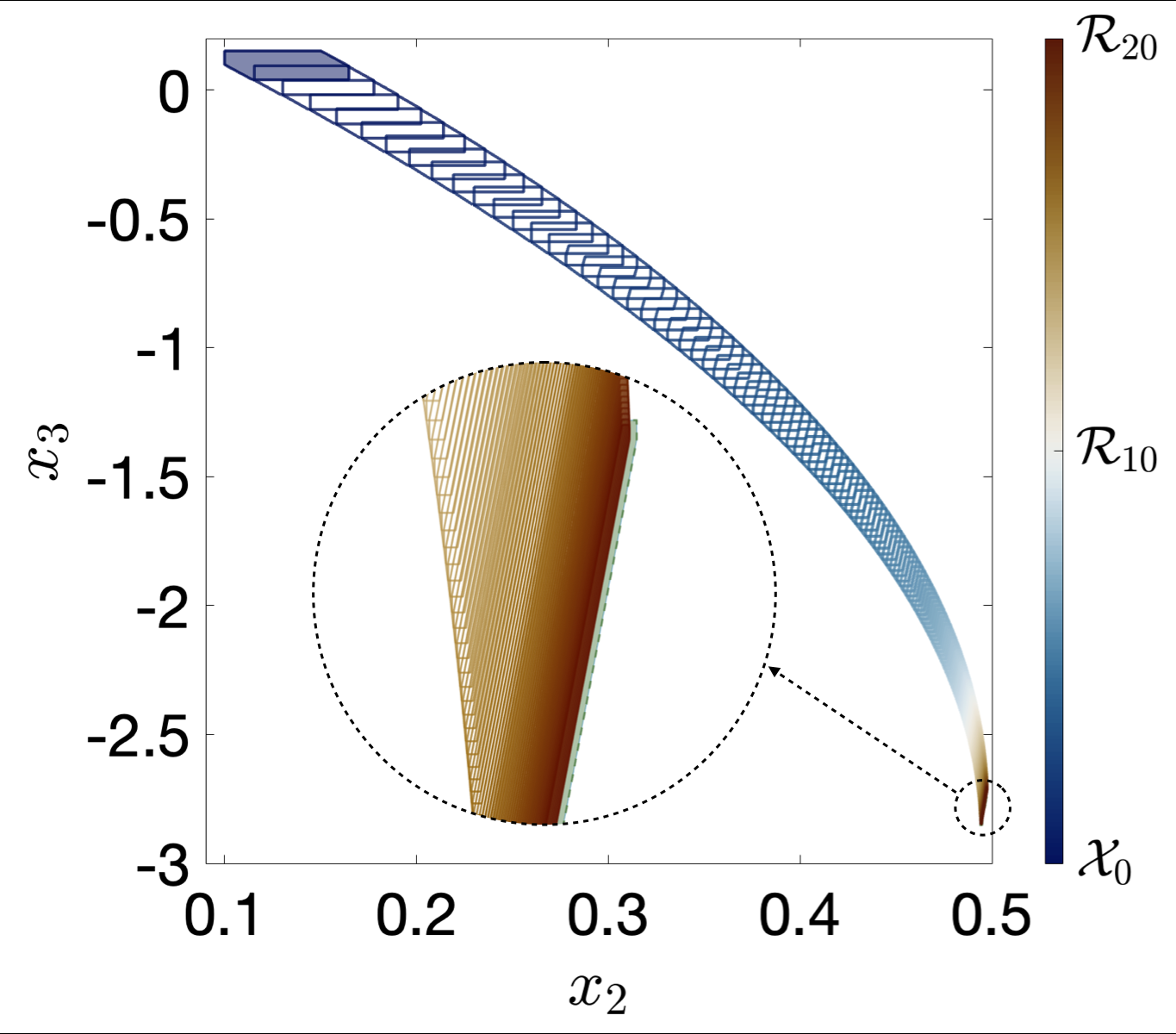}
\\
\hline
\rotatebox[origin=c]{90}{\textbf{stabilization}} & 
\adjincludegraphics[width=.3\linewidth, valign = c]{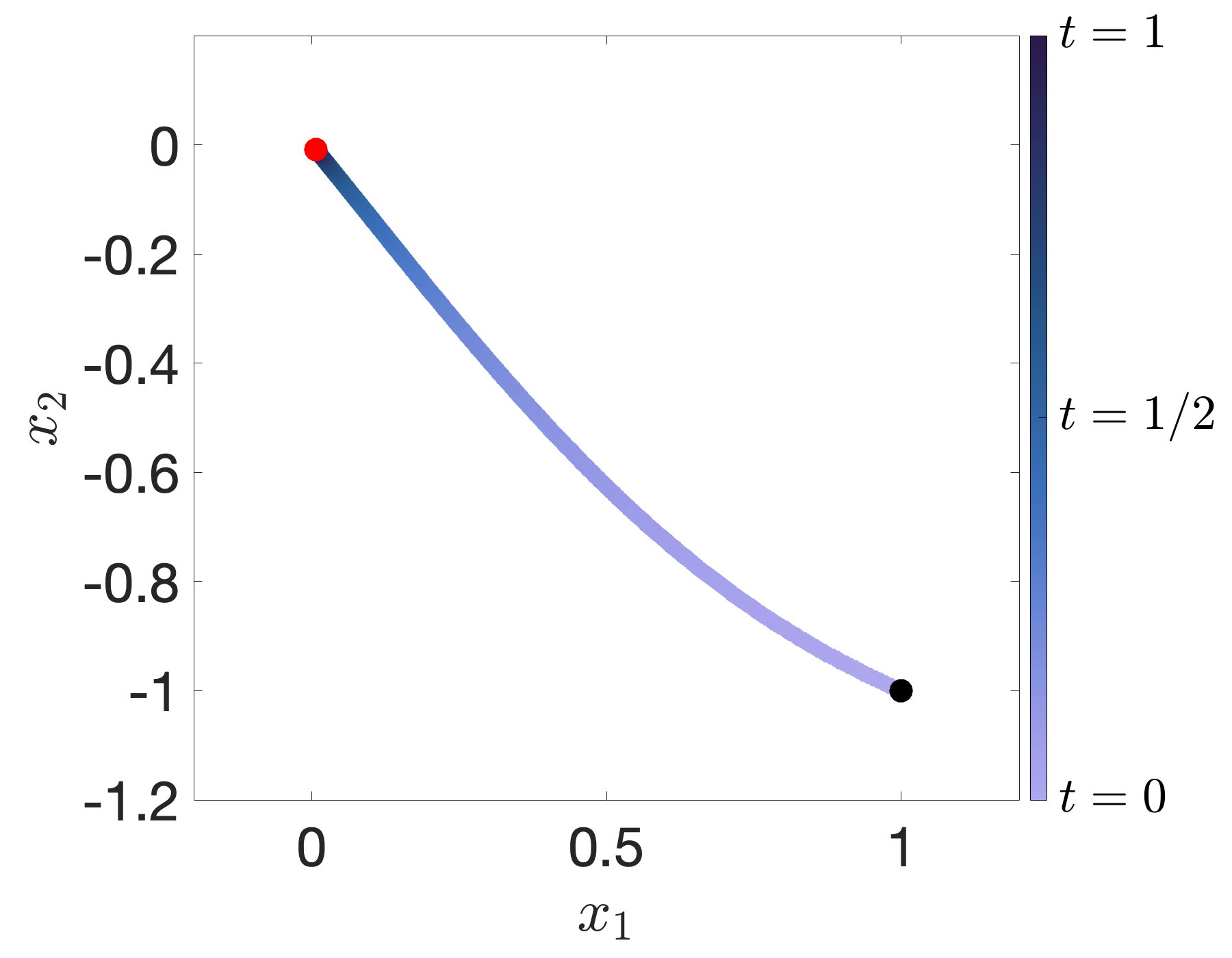} &
\adjincludegraphics[width=.3\linewidth, valign = c]{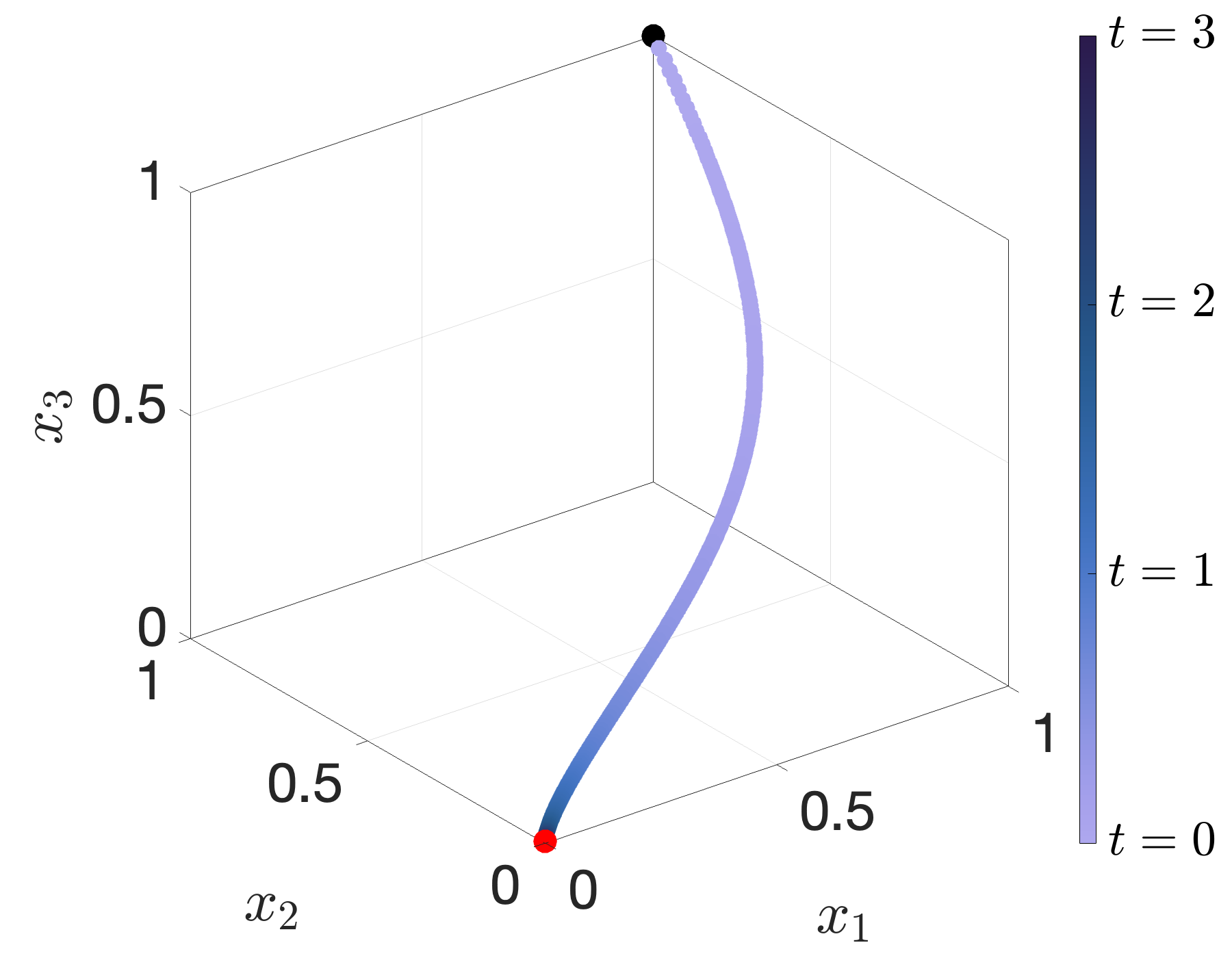} &
\adjincludegraphics[width=.3\linewidth, valign = c]{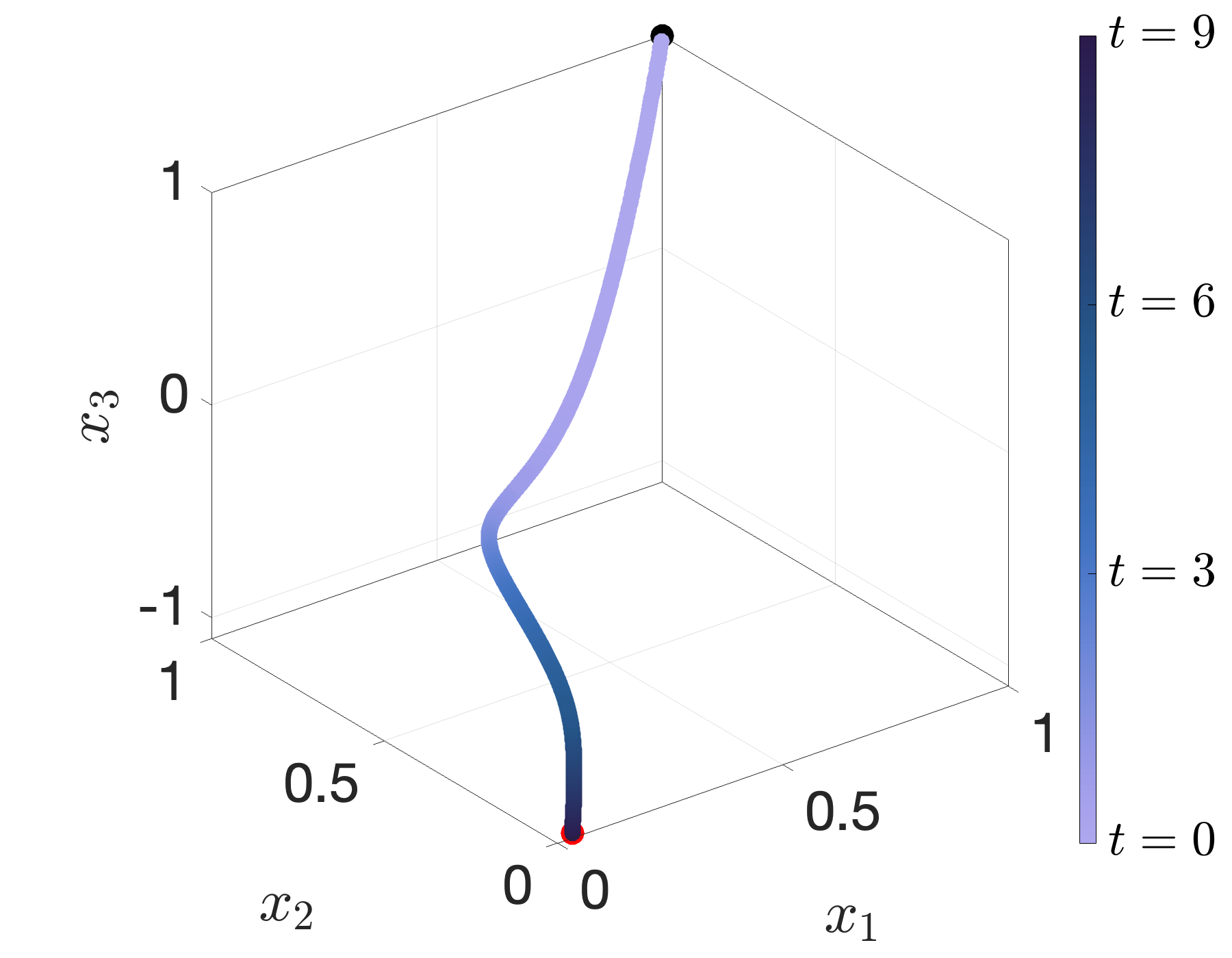}
\\
\hline
\end{tabular}
}
\caption{ Illustration of reachable set evolution and stabilize control design for the nonlinear control-affine systems in Table \ref{tab:systems}, with (a) $\lambda = \{0.3,0.2,-0.5\}$; (b) $\lambda = \{1,1,0\}$; and (c) $\lambda = \{-0.1,-0.4,0.2, -0.2,-0.5,0\}$ through their EBIF-induced bilinear systems. In the reachability simulations, we present the evolution of reachable set with no control input applied ($u(t)=0$). The initial sets (colored in blue) for all three systems are selected as $\mathcal{X}_0 = \{c+\sum_{i=1}^{n}\alpha_i h_i|\alpha_i\in[-1,1]\}$, with the center $c = (0.125,\ldots,0.125)'\in\mathbb{R}^n$ and the generators $h_i = 0.025e_i$ for all $i=1,\ldots,n$. The green region indicates the reachable set at the final time $T$ computed directly through the original nonlinear system from CORA, while the fading regions $\mathcal{R}_t$ illustrate the time-evolving reachable sets generated by the corresponding EBIF-induced bilinear system. In the stabilizing control design simulation, the goal is to design a feedback control through EBIF-enabled bilinear system to steer each system from a given initial (marked in black) to an equilibrium point $x_e$ (marked in red; values provided in Table \ref{tab:systems}).
}\label{fig:simulationtable}
\end{figure*}

\subsection{Stabilizing control design}

Designing stable control inputs for nonlinear systems is a critical challenge in control theory. The Lyapunov stability method provides a powerful framework for analyzing and ensuring system stability by constructing a suitable Lyapunov function that guarantees the system's trajectories remain bounded and converge to a desired equilibrium \cite{Gutman_TAC81,Amato_TCS09}. 
In the second scenario, we explore the Lyapunov-based techniques to design stable controllers for nonlinear systems through their bilinear systems.

To illustrate this concept, we consider the bilinear system 
\begin{align*}
    \dot{z}(t)&=Az(t)+\sum_{i=1}^m (B_iz(t)+D_i)u_i(t) := Az(t)+Su(t)
\end{align*}
associated with an exactly bilinearizable nonlinear control-affine system, where $S$ is an abbreviation denoting $S = S(z(t)) := [B_1z(t)+D_1,~\cdots, ~B_mz(t)+D_m ]$.
Assuming $P\succ0$ is a positive definite matrix with $S^{'}Pz \neq 0$, 
then there exists some positive definite matrix $K\succ 0$ such that the control in the form of
\begin{align*}
    u(t) = -KS^{'}(z(t))Pz(t)
\end{align*}
will stabilize the bilinear system. 
To see why this control leads to stabilize system, suppose the Lyapunov function $V:\mathbb{R}^r\to\mathbb{R}$ is a continuously differentiable defined as $V(z) = z^{'}Pz$. Substituting the given feedback control into the bilinear system dynamics yields $\dot{z}(t) = Az(t) - S(z(t))KS^{'}(z(t))Pz(t)$,
then taking the directional derivative of $V$ along the trajectory of $z$ yields
\begin{align*}
    \dot{V} 
    &= z'\left(A'P+PA\right)z - 2\|z' PS\sqrt{K}\|^2, 
\end{align*}
where $\|\cdot\|$ denotes the Euclidean norm of the given vector and $\sqrt{K}$ is the matrix satisfying $\sqrt{K}'\sqrt{K} = K$ by positive definiteness of matrix $K$.
It follows immediately that if $z$ satisfies $z^{'}(A^{'}P+PA)z < 0$ then clearly we have $\dot{V}<0$ as desired. On the other hand, for $z^{'}(A^{'}P+PA)z\geq 0$ with $z\neq 0$, denote $Q := A^{'}P+PA$, then $\dot{V}<0$ implies $z^{'}Qz - 2\|z^{'} PS\sqrt{K}\|^2 < 0$ and therefore
\begin{align*}
    \frac{\|z^{'}Qz\|}{\|z^{'}z\|} 
    &< 2\frac{\|z^{'} PS\sqrt{K}\|^2}{\|z^{'}z\|} \leq 2C_K^2 c_2\|z^{'}z\| 
\end{align*}
where $C_K = \|\sqrt{K}\|$ denotes the matrix norm of $\sqrt{K}$ and $c_2$ is constant depending on $P$, $B_i$, and $D_i$ (see Lemma \ref{lemma:stable} in Appendix for details). This indicates by choosing the matrix $K\succ 0$ with
\begin{align*}
    C_K = \|\sqrt{K}\| > \sqrt{\frac{\kappa(Q,P,B_i,D_i)}{2m\varepsilon^2}},
\end{align*}
where $\kappa(Q,P,B_i,D_i)$ is a constant depending on $Q,P,B_i,D_i$ as provided in Lemma \ref{lemma:stable}, then $V$ is guaranteed to satisfy $\dot{V}< 0$ and thus $z$ is stabilized to an equilibrium point as desired.

Despite the theoretical bound provided in the above discussion, in practice, the construction can be simplified to avoid the potentially cumbersome design of $K$. 
One practical approach is to choose $K$ as a diagonal matrix with all positive diagonal entries, which facilitates one to construct $K$ satisfying the required conditions. Alternatively, one may directly select $P$ such that $Q = A^{'}P+PA \prec 0$. 
Utilizing the above result, we apply the stabilize control
\begin{align}
    u_{stable}(t) = -K\begin{pmatrix}
        (B_1\Psi(x(t))+D_1)^{'}\\
        \vdots\\
        (B_m\Psi(x(t))+D_m)^{'}
    \end{pmatrix} P\Psi(x(t))
\end{align}
to the nonlinear system $\dot{x}(t) = f(x) + \sum_{i=1}^{m}g_i(x) u_i(t)$ and present the stabilizing results of the three systems in Figure \ref{fig:simulationtable}.



\subsection{Numerical control design for optimal control problems}

Leveraging the finite-dimensional bilinear representation provided by EBIF, in the last section, we explore the optimal control framework induced by the associated bilinear embedding. 
To illustrate our main idea, we consider the optimal control problem in Bolza type \cite{Berkovitz_CRC13} with both running costs and terminal costs in quadratic form, given by
\begin{align} \label{eq:optimalcontrol}
    \min_{u\in\mathcal{U}}&\quad J(u)= \int_0^T L(x(t),u(t))dt + \varphi(x(T)), \\
    {\rm s.t.}&\quad \dot{x}(t) = f(x(t)) +\sum_{i=1}^{m} u_i(t)g_i(x(t)) ,~x(0)=x_0, \nonumber
\end{align}
where $L(x(t),u(t)) =x^{'}Kx + u^{'}Ru$ and $\varphi(x(T)) = x^{'}Qx$ for some $K,Q\succeq 0$ and $R\succ0$, and the space of control input $\mathcal{U} = \{u:[0,T]\to \Omega\subset \mathbb{R}^m\}$ with some compact set $\Omega$. Note that the existence of an optimal solution to \eqref{eq:optimalcontrol} is guaranteed following that $L$ is continuously differentiable on $\mathbb{R}^n\times\mathbb{R}^m$ , $\varphi$ is lower semi-continuous function, and $u$ takes values from a compact domain \cite{Berkovitz_CRC13}. 
Applying the smooth embedding constructed by EBIF, the associated cost functions remains in quadratic form as $J(u)= \int_0^T \bar L(z(t),u(t))dt + \bar{\varphi}(z(T))$, with $\bar L(z(t),u(t)) =z^{'}\bar Kz + u^{'}\bar Ru$, $\bar\varphi(z(T)) = z^{'}\bar Qz$, and $(\bar K, \bar R,\bar Q) = ({P}_n^{'}K{P}_n, R, {P}_n^{'}Q{P}_n)$.
This quadratic preserving property enables us to further derive the explicit form of optimal control by solving the optimality condition $\nabla_{u_k}\bar{\mathcal{H}}(z(t),u(t),\bar{\lambda}(t)) = 0$ for each $k=1,\ldots,m$, where $\bar{\mathcal{H}}$ is the Hamiltonian to the bilinear system $\Sigma_b$ and $\bar{\lambda}(t)$ denotes the costate trajectory satisfying $\dot{\bar \lambda}(t) = - \nabla_{z} \bar{\mathcal{H}}(z(t),u(t),\bar{\lambda}(t))$ with $\bar{\lambda}(T) = \nabla_{z} \varphi(z(T))$. This leads to the explicit form of the optimal control as
    \begin{align}
        u_{optimal}(t) = \frac{1}{2}R^{-1}
        \begin{pmatrix}
            \bar{\lambda}(t)^{'}B_1\Psi(x(t))\\
            \vdots \\
            \bar{\lambda}(t)^{'}B_m\Psi(x(t))
        \end{pmatrix}.
    \end{align}

In addition to this explicit solution to the optimal control problem with quadratic costs, to demonstrate the power of EBIF in a numerical setting, we now drop the running cost in the objective functional and consider only the terminal cost. That is, we consider the optimal control problems of Mayer type with the objective function given by $J(u)= \varphi(x(T))$. This omission is valid in the sense that these two types are interchangeable with the same minimum cost \cite{Bliss_46, Fleming_Springer12}, providing the running cost $L(x,u)$ is Lipschitz continuous in the first argument uniformly for any $u(t)\in\mathbb{R}^m$.
Given the fact that the reachable sets of a control system driven by bounded continuous inputs and piecewise constant control inputs are identical \cite{Jurdjevic_Cambridge97}, we focus on designing piecewise constant control inputs for the above optimal control problem. That is, we consider a partition $0=t_0\leq t_1\leq\cdots \leq t_s = T$ of the time horizon $[0,T]$ and take the control in each time interval $[t_k,t_{k+1})$ as $u(t) = u(t_k) := \mathfrak{u}_k$, 
where $\mathfrak{u}_k$ denotes the constant control applied during the $k$th subinterval with $k=0,1,\cdots,s-1$. In this case, the trajectory of the system satisfies
\begin{align} \label{eq:piecewisecontrol}
    x(T)  =\mathcal{P}_n\big( \Phi(\mathfrak{u}_{s-1})\cdots\Phi(\mathfrak{u}_{1})\Phi(\mathfrak{u}_{0})\Psi(x_0) \big),
\end{align}
where $\Phi(\mathfrak{u}_{k}):=\exp\{\delta_k(A + \sum\nolimits_{i=1}^{m} \mathfrak{u}_{ik} B_i )\}$ denotes the flow driven by piecewise constant control $\mathfrak{u}_{k}:=(\mathfrak{u}_{1k},\ldots,\mathfrak{u}_{mk})^{'}$ in the interval $[t_k,t_{k+1})$ and $\delta_k = t_{k+1} - t_{k}$.
One direct advantage of the above explicit expression \eqref{eq:piecewisecontrol} is that the optimal control problem \eqref{eq:optimalcontrol} can be transformed into an unconstrained optimization problem with the objective function being $ J(u)= \varphi\Big( \mathcal{P}_n\big( \Phi(\mathfrak{u}_{s-1})\cdots\Phi(\mathfrak{u}_{1})\Phi(\mathfrak{u}_{0})\Psi(x_0) \big) \Big)$ and thus
\begin{align} \label{eq:control_numerical}
    u_{num} = \underset{\mathfrak{u}_{k} \in\mathbb{R}^m}{\arg\min}~ \varphi\Big( \mathcal{P}_n\big( \Phi(\mathfrak{u}_{s-1})\cdots\Phi(\mathfrak{u}_{0})\Psi(x_0) \big) \Big). 
\end{align}
This formulation enables one to adopt any standard optimization solvers to efficiently solve the optimal control problem in \eqref{eq:optimalcontrol}. To demonstrate the practical utility of this numerical approach, we present a steering simulation involving 250 nonlinear unicycle systems governed by the dynamics described in Example \ref{ex:unicycle_derive}. The objective function is set as $J = \|x(T) - x_F\|^2$, where $x_0$ and $x_F$ are the given initial and final configurations, forming the shapes of the patterns ``WUSTL'' and ``AMLAB,'' respectively. The goal is to steer each system from the assigned initial to the desired final using the designed control strategy, and the result is demonstrated in Figure \ref{fig:unipattern}.
\begin{figure}[H]
    \centering
    \includegraphics[width=\linewidth]{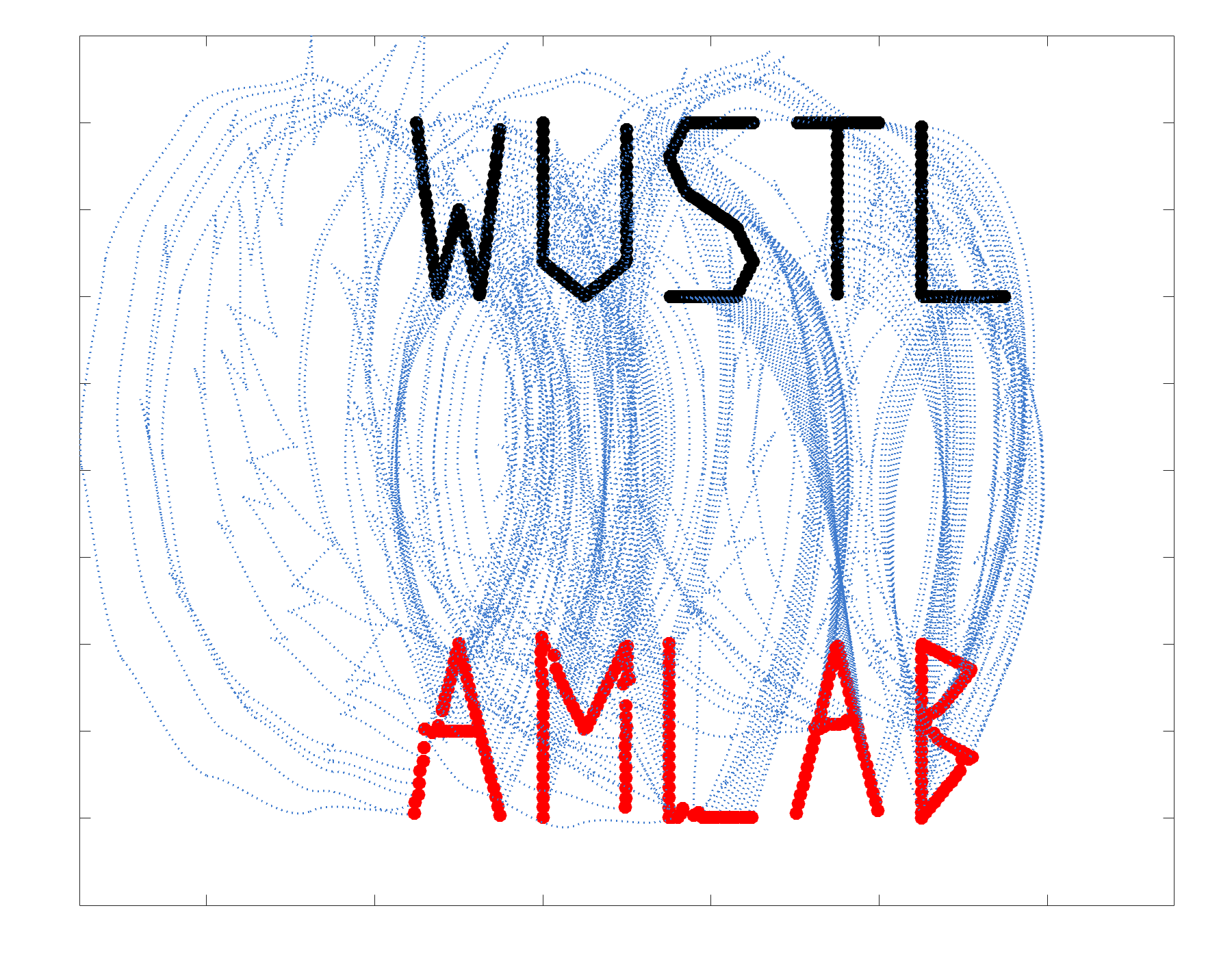}
    \caption{Illustration of optimal steering problem of the nonlinear control-affine system in Example \ref{ex:unicycle_derive}, with the objective function $J = \|x(T) - x_F\|^2$. Initial and final states of each system are represented by black and red dots, respectively. The dashed blue lines indicate the trajectories of individual systems under the numerical control obtained in \eqref{eq:control_numerical}.}
    \label{fig:unipattern}
\end{figure}

\section{Conclusion} \label{sec:conclusion}

In this paper, we introduce EBIF, a novel and systematic framework for transforming a nonlinear  control-affine system into an exact, finite-dimensional bilinear representation.
Utilizing the definition of direct limit and leveraging the concept of vector space Noetherian, we derive both necessary and sufficient conditions for a nonlinear system to be exactly bilinearizable.
This new paradigm not only formalizes the exact bilinearization process through an iterative algorithm, but also ensures the equivalence between the original nonlinear system and its bilinear counterpart, providing the constructed bilinear map is a smooth embedding. 
These EBIF-induced transformations enable the use of bilinear system theory to facilitate reachability analysis, optimal control design, and stabilization in a more computationally tractable manner.
Simulations across various nonlinear dynamics validate the practical effectiveness of EBIF, highlighting its applicability in enabling both theoretical and numerical control synthesis for complex dynamical systems.


\appendices
\section{ }     

\begin{lemma} \label{lemma:coord}
    Suppose $\Psi:\mathbb{R}^n \to \mathbb{R}^r$ is a smooth map from $\mathbb{R}^n$ to $\mathbb{R}^r$, and let $x=(x_1,\dots,x_n)$ and $z=(z_1,\dots,z_r)$ be the coordinates on $\mathbb{R}^n$ and $\mathbb{R}^r$, respectively. Then,
    \begin{align*}
        \bar{f}=\Psi_{\ast}f = \sum_{j=1}^r\sum_{l=1}^n f_l   \frac{\partial \Psi_j}{\partial x_l} \frac{\partial }{\partial z_j}
    \end{align*}
    where $f$ and $\bar f$ are the vector fields on $\mathbb{R}^n$ to $\mathbb{R}^r$, respectively, and $f=\sum_{l=1}^nf_l\frac{\partial}{\partial x_l}$ are the coordinate representations of $f$.
\end{lemma}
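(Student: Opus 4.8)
The plan is to verify the pushforward formula by unwinding the definition of $\Psi$-related vector fields (Definition of $\Psi$-related vector fields above) in local coordinates and applying the chain rule. First I would recall that the pushforward $\bar f = \Psi_* f$ is, by definition, the vector field on $\mathbb{R}^r$ satisfying $\frac{\partial\Psi}{\partial x}\cdot f(x) = \bar f(\Psi(x))$ for all $x$; equivalently, for a smooth test function $h\in C^\infty(\mathbb{R}^r)$ one has $(\mathcal{L}_{\bar f}h)(\Psi(x)) = (\mathcal{L}_f(h\circ\Psi))(x)$. So the task reduces to computing the components of the vector $\frac{\partial\Psi}{\partial x}\cdot f(x)$ in the standard frame $\{\partial/\partial z_j\}_{j=1}^r$ on $\mathbb{R}^r$.

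Next I would write $f = \sum_{l=1}^n f_l \frac{\partial}{\partial x_l}$, so that as a column vector $f(x) = (f_1(x),\dots,f_n(x))'$. The Jacobian $\frac{\partial\Psi}{\partial x}$ is the $r\times n$ matrix with $(j,l)$-entry $\frac{\partial\Psi_j}{\partial x_l}$. Carrying out the matrix-vector product, the $j$th component of $\frac{\partial\Psi}{\partial x}\cdot f(x)$ is $\sum_{l=1}^n \frac{\partial\Psi_j}{\partial x_l} f_l$, which is precisely $\mathcal{L}_f \Psi_j$. Interpreting this column vector as a tangent vector at $\Psi(x)$ in the $z$-coordinate frame then gives
\begin{align*}
    \bar f = \Psi_* f = \sum_{j=1}^r \Big(\sum_{l=1}^n f_l \frac{\partial\Psi_j}{\partial x_l}\Big)\frac{\partial}{\partial z_j} = \sum_{j=1}^r\sum_{l=1}^n f_l\frac{\partial\Psi_j}{\partial x_l}\frac{\partial}{\partial z_j},
\end{align*}
as claimed. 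To be fully rigorous one may instead verify this by testing against coordinate functions: applying the candidate right-hand side to $z_k$ yields $\sum_{l=1}^n f_l \frac{\partial\Psi_k}{\partial x_l} = \mathcal{L}_f\Psi_k = \mathcal{L}_f(z_k\circ\Psi)$, which matches the defining property of the pushforward, and since the $z_k$ generate $C^\infty$ locally (together with linearity and the Leibniz rule) this determines $\bar f$ uniquely.

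This lemma is essentially a bookkeeping exercise, so there is no real obstacle; the only point requiring mild care is keeping the two index ranges straight — $j$ running over $1,\dots,r$ (the target coordinates) and $l$ over $1,\dots,n$ (the source coordinates) — and making sure the identification of a vector of partial-derivative coefficients with an element of $T_{\Psi(x)}\mathbb{R}^r$ is stated cleanly. The formula for $\bar g_i = \Psi_* g_i$ follows verbatim with $f$ replaced by $g_i$, which is why only the $f$ case is recorded here.
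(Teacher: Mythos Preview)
Your proof is correct and essentially the same as the paper's: both unwind the definition of the pushforward in coordinates and apply the chain rule. The only cosmetic difference is that the paper tests against an arbitrary smooth function $h\in C^\infty(\mathbb{R}^r)$ and expands $\frac{\partial(h\circ\Psi)}{\partial x_l}$ via the chain rule, whereas you work directly with the Jacobian matrix-vector product (and then optionally test against the coordinate functions $z_k$); these are equivalent formulations of the same one-line computation.
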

\begin{proof}
    For any smooth function $h\in \mathcal{C}^{\infty}(\mathbb{R}^r)$ on $\mathbb{R}^r$, it follows that
    \begin{align*}
        (\Psi_{\ast}&f)h = 
        \Psi_{\ast}\left(\sum_{l=1}^n f_l\frac{\partial}{\partial x_l}\right) h = \sum_{l=1}^n f_l\left( \Psi_{\ast}\frac{\partial}{\partial x_l} \right)h \\
        &= \sum_{l=1}^n f_l\left( \frac{\partial}{\partial x_l} \right)( h\circ \Psi) = \sum_{l=1}^n f_l  \frac{\partial ( h\circ \Psi)}{\partial x_l}  \\
        &= \sum_{l=1}^n f_l \left( \sum_{j=1}^r \frac{\partial h}{\partial z_j} \frac{\partial \Psi_j}{\partial x_l} \right) = \sum_{j=1}^r\sum_{l=1}^n f_l   \frac{\partial \Psi_j}{\partial x_l} \frac{\partial }{\partial z_j} h,
    \end{align*}
    which leads to the desired result.
\end{proof}

\begin{lemma} \label{lemma:Kstationaryproof}
    Let $\tau$ be a smooth vector field and $\Gamma = \langle \gamma_1, \ldots, \gamma_d\rangle_{\mathbb{A}}$ be a free module generated by a set of smooth functions $\{\gamma_i| \gamma_i\in C^{\infty}, i = 1,\ldots, d\}$, then for any smooth function $h \in \Gamma$, it holds that
     \begin{align*}
         \mathcal{L}_{\tau}h \in \mathcal{L}_{\tau}\Gamma.
     \end{align*}
     If further $\Gamma$ is invariant under $\tau$, then $\mathcal{L}_{\tau}h \in\Gamma$.
\end{lemma}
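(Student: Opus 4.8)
The plan is to unwind the two relevant definitions and then invoke the $\mathbb{R}$-linearity of the Lie derivative on smooth functions. I work in the setting $\mathbb{A}=\mathbb{R}$ used throughout the EBIF construction, so that $\Gamma=\langle\gamma_1,\dots,\gamma_d\rangle_{\mathbb{R}}$ is simply the real vector subspace of $C^\infty(\mathbb{R}^n)$ spanned by the generators. Hence $h\in\Gamma$ means $h=\sum_{i=1}^d\alpha_i\gamma_i$ for some scalars $\alpha_1,\dots,\alpha_d\in\mathbb{R}$.

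First I would apply $\mathcal{L}_\tau$, the directional derivative along $\tau$, to this representation of $h$. Since $\mathcal{L}_\tau$ is $\mathbb{R}$-linear on $C^\infty(\mathbb{R}^n)$ — the constants $\alpha_i$ pass through, and no Leibniz cross-terms appear because $\mathcal{L}_\tau\alpha_i=0$ — one obtains $\mathcal{L}_\tau h=\sum_{i=1}^d\alpha_i\,\mathcal{L}_\tau\gamma_i$. By Definition~\ref{def:LtauGamma}, the right-hand side is precisely a generic element of $\mathcal{L}_\tau\Gamma=\langle\mathcal{L}_\tau\gamma_1,\dots,\mathcal{L}_\tau\gamma_d\rangle_{\mathbb{R}}$, which establishes the first assertion $\mathcal{L}_\tau h\in\mathcal{L}_\tau\Gamma$. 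For the second assertion I would simply recall Definition~\ref{def:invariant}: the submodule $\Gamma$ is $\tau$-invariant exactly when $\mathcal{L}_\tau\Gamma\subseteq\Gamma$; combining this with the first part gives $\mathcal{L}_\tau h\in\mathcal{L}_\tau\Gamma\subseteq\Gamma$, as desired.

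I do not expect a genuine obstacle here, since the statement is essentially a bookkeeping consequence of linearity. The only point that warrants care is the role of the coefficient ring: if $\mathbb{A}$ were allowed to consist of non-constant smooth functions, the module-level operation $\mathcal{L}_\tau\big(\sum_i\alpha_i\gamma_i\big):=\sum_i\alpha_i\mathcal{L}_\tau\gamma_i$ introduced after Definition~\ref{def:LtauGamma} would no longer coincide with the genuine Lie derivative of the function $h$, because the Leibniz rule contributes extra terms $(\mathcal{L}_\tau\alpha_i)\gamma_i$. Stating the lemma with $\mathbb{A}=\mathbb{R}$, consistent with the rest of the paper, removes this ambiguity and lets the argument above go through verbatim.
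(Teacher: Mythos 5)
Your proof is correct and follows essentially the same route as the paper's: expand $h=\sum_i\alpha_i\gamma_i$, use linearity of $\mathcal{L}_\tau$ to get $\mathcal{L}_\tau h=\sum_i\alpha_i\mathcal{L}_\tau\gamma_i\in\mathcal{L}_\tau\Gamma$, and then apply the invariance inclusion $\mathcal{L}_\tau\Gamma\subseteq\Gamma$ for the second claim. Your closing remark about the coefficient ring is a worthwhile refinement: the paper's proof silently pulls the coefficients $k_i\in\mathbb{A}$ out of the derivative, which is only legitimate when $\mathbb{A}=\mathbb{R}$ (as the paper later fixes), and you correctly flag that for non-constant coefficients the Leibniz terms $(\mathcal{L}_\tau\alpha_i)\gamma_i$ would break the identification with the module-level operation.
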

\begin{proof}
    Suppose $h \in \Gamma$, then there exists $k_1,\ldots,k_d \in \mathbb{A}$ such that $h = \sum_{i=1}^{d} k_i\gamma_i$, then by linearity of Lie derivative, 
    \begin{align*}
        \mathcal{L}_{\tau}h(x) = \frac{\partial h}{\partial x}\tau(x) = \sum_{i=1}^d k_i\frac{\partial \gamma_i}{\partial x}\tau(x) = \sum_{i=1}^d k_i \mathcal{L}_{\tau} \gamma_i(x),
    \end{align*}
    which is an element in $\mathcal{L}_{\tau}\Gamma$ (by Definition \ref{def:LtauGamma}). If further $\Gamma$ is invariant under $\tau$, then $\mathcal{L}_{\tau}\Gamma\subset \Gamma$ and thus $\mathcal{L}_{\tau}h \in\Gamma$.
\end{proof}


\begin{lemma} \label{lemma:brockettreachability}
    Given a bilinear system $\Sigma_b$ in \eqref{eq:Sigma_b} and consider the matrix differential equation in $\mathbb{R}^{r\times r}$ associated with $\Sigma_b$, given by
    \begin{align*}
        \bar{\Sigma}_b:\dot {\Phi}(t) = A\Phi(t) + \sum\nolimits_{i=1}^{m} B_i\Phi(t) u_i(t). 
    \end{align*}
    Assume $[\text{ad}_A^k B_i,B_j]=0$ for all $i,j=1,\ldots,m$ and let $\mathcal{H} = \text{span}\{ \text{ad}_A^k B_i\}$ for $k= 0,\ldots,r^2-1$, then there exists continuous controls which steer from $\Phi(0) = X_0$ to $\Phi(T) = X_1$ at some final time $T$ if and only if there exists $H\in\mathcal{H}$ such that
    $X_1 = e^{AT}e^H X_0$.
\end{lemma}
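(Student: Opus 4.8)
This is essentially Brockett's reachability theorem for bilinear systems on matrix groups \cite{Brockett_SIAM72}, specialized to the abelian situation created by the hypothesis, so the plan is to follow that line of argument. Fix $T>0$ and pass to the \emph{interaction picture}: write $\Phi(t)=e^{At}\Psi(t)$ with $\Psi(0)=X_0$. Differentiating, $\dot\Psi(t)=M(t)\Psi(t)$ where $M(t)=\sum_{i=1}^m u_i(t)\,C_i(t)$ and $C_i(t):=e^{-At}B_ie^{At}$. Since $\dot C_i=-\text{ad}_A C_i$, one has $C_i(t)=e^{-t\,\text{ad}_A}B_i=\sum_{k\ge0}\frac{(-t)^k}{k!}\text{ad}_A^kB_i$, and by Cayley--Hamilton applied to the operator $\text{ad}_A$ on the $r^2$-dimensional space of $r\times r$ matrices, each $C_i(t)$ is a scalar-function combination of $\text{ad}_A^0B_i,\dots,\text{ad}_A^{r^2-1}B_i$, so $C_i(t)\in\mathcal{H}$ for all $t$. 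Conversely, $\tfrac{d^k}{dt^k}C_i(t)\big|_{t=0}=(-1)^k\text{ad}_A^kB_i$ is a limit of combinations of the $C_i(t)$, hence lies in $\text{span}\{C_i(t):t\in\mathbb{R}\}$; therefore $\mathcal{H}=\text{span}\{C_i(t):t\in\mathbb{R},\ i=1,\dots,m\}$.

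Next I would show $\mathcal{H}$ is abelian. Using $C_i(s)=e^{-At}C_i(s-t)e^{At}$, one gets $[C_i(s),C_j(t)]=e^{-At}[\,C_i(s-t),B_j\,]e^{At}$; since $C_i(s-t)\in\text{span}\{\text{ad}_A^kB_i:k\ge0\}$ and the hypothesis $[\text{ad}_A^kB_i,B_j]=0$ holds for every $k\ge0$ (by Cayley--Hamilton it need only be assumed for $k\le r^2-1$), this bracket vanishes, so all the $C_i(s)$ pairwise commute and $\mathcal{H}$ is an abelian Lie algebra. Consequently the matrices $\{M(t)\}_{t\in[0,T]}$ pairwise commute, and setting $N(t)=\int_0^t M(s)\,ds$ we have $[\,\dot N(t),N(t)\,]=\int_0^t[M(t),M(s)]\,ds=0$, so $\tfrac{d}{dt}e^{N(t)}=M(t)e^{N(t)}$ and, by uniqueness, $\Psi(T)=\exp\!\big(\int_0^T M(s)\,ds\big)X_0$. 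Since $\mathcal{H}$ is a linear subspace and $M(s)\in\mathcal{H}$ pointwise, $H:=\int_0^TM(s)\,ds\in\mathcal{H}$, giving $\Phi(T)=e^{AT}e^{H}X_0$; this is necessity.

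For sufficiency I would prove the linear ``moment map'' $\mathcal{F}\colon u=(u_1,\dots,u_m)\mapsto\int_0^T\sum_i u_i(s)C_i(s)\,ds$ is onto $\mathcal{H}$. Fixing $i$, if a functional $\ell$ annihilates the image of $u_i\mapsto\int_0^Tu_i(s)C_i(s)\,ds$, then the continuous scalar function $s\mapsto\ell(C_i(s))$ integrates to zero against every continuous $u_i$, hence vanishes identically on $[0,T]$; since $C_i$ is entire, $\ell$ then annihilates all derivatives $\text{ad}_A^kB_i$ at $s=0$, i.e. all of $\text{span}\{\text{ad}_A^kB_i\}$. Thus each single-input image equals $\text{span}\{\text{ad}_A^kB_i\}$, and summing over $i$ yields $\operatorname{Im}\mathcal{F}=\mathcal{H}$. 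Given $H\in\mathcal{H}$, choose $u$ with $\mathcal{F}(u)=H$; the resulting continuous control drives $\Psi(T)=e^HX_0$, so $\Phi(T)=e^{AT}e^HX_0=X_1$ is reached at time $T$.

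The main obstacle I anticipate is the bookkeeping around $\mathcal{H}$: upgrading the stated commutator hypothesis to genuine commutativity of all of $\mathcal{H}$ (via the conjugation identity together with Cayley--Hamilton) and verifying that $\mathcal{H}=\text{span}\{C_i(t)\}$, so that the abelian structure is inherited by the time-varying generator $M(t)$. Once this is established, the exact integration $\Psi(T)=\exp(\int_0^TM)\,X_0$ and the surjectivity of the moment map are routine, and both directions follow.
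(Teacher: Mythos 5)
Your proposal is correct. The paper itself offers no argument for this lemma --- its ``proof'' is the single line ``See \cite{Brockett_SIAM72} for details'' --- so there is nothing in-paper to match step for step; what you have written is a faithful, self-contained reconstruction of the classical Brockett argument behind that citation, and every step checks out. In particular: the interaction-picture substitution $\Phi=e^{At}\Psi$ correctly reduces the dynamics to $\dot\Psi=M(t)\Psi$ with $M(t)=\sum_i u_i(t)C_i(t)$ and $C_i(t)=e^{-t\,\mathrm{ad}_A}B_i$; Cayley--Hamilton applied to $\mathrm{ad}_A$ on the $r^2$-dimensional matrix space legitimately caps the exponents at $r^2-1$ and gives $\mathrm{span}\{C_i(t)\}=\mathcal{H}$; the conjugation identity $C_i(s)=e^{-At}C_i(s-t)e^{At}$ correctly upgrades the stated hypothesis $[\mathrm{ad}_A^kB_i,B_j]=0$ to pairwise commutativity of all the $C_i(s)$, which is the crucial structural point; the exact integration $\Psi(T)=\exp\bigl(\int_0^TM\bigr)X_0$ is justified because $[\dot N(t),N(t)]=\int_0^t[M(t),M(s)]\,ds=0$, and $H=\int_0^TM\in\mathcal{H}$ since $\mathcal{H}$ is a finite-dimensional (hence closed) subspace; and the surjectivity of the moment map onto $\mathcal{H}$ via the fundamental lemma of the calculus of variations together with analyticity of $s\mapsto C_i(s)$ correctly handles sufficiency. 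If anything, your version is slightly sharper than the statement as written, since you establish reachability at every prescribed time $T>0$ rather than merely at ``some'' final time. No gaps.
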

\begin{proof}
    See \cite{Brockett_SIAM72} for details. 
\end{proof}

\begin{lemma}\label{lemma:dimpreserve}
    Let $\Psi:\mathbb{R}^n \to \mathbb{R}^r$ be a smooth map from $\mathbb{R}^n$ to $\mathbb{R}^r$, and suppose $f_i$ and $\bar f_i$ are $\Psi$-related vector fields for $1\leq i \leq m$, then the following holds
    $$[\bar{f}_i , \bar{f}_j]_{\Psi(x)} = \Psi_{\ast} [f_i , f_j]_x,$$
    for all $0\leq i\neq j \leq m$.
\end{lemma}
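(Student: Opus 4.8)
The plan is to deduce the identity from the more basic and entirely standard fact that \emph{the Lie bracket of $\Psi$-related vector fields is $\Psi$-related to the Lie bracket of their counterparts}; the pointwise statement $[\bar f_i,\bar f_j]_{\Psi(x)} = \Psi_*[f_i,f_j]_x$ is then just an unwinding of what ``$\Psi$-related'' means. So the first step I would take is to recast the hypothesis $\frac{\partial\Psi}{\partial x}\cdot f_i(x) = \bar f_i(\Psi(x))$ in its equivalent ``functional'' form: for every scalar test function $h\in C^\infty(\mathbb{R}^r)$,
\begin{align}
    \label{eq:psirel_func}
    f_i(h\circ\Psi) = (\bar f_i h)\circ\Psi .
\end{align}
This is exactly the chain rule: the left-hand side evaluated at $x$ is $\nabla(h\circ\Psi)(x)\cdot f_i(x) = \nabla h(\Psi(x))\cdot\frac{\partial\Psi}{\partial x} f_i(x) = \nabla h(\Psi(x))\cdot\bar f_i(\Psi(x)) = (\bar f_i h)(\Psi(x))$. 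This is the same mechanism already used in Proposition~\ref{prop:psirelated}.

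Next I would apply \eqref{eq:psirel_func} twice, together with the fact that the Lie bracket acts on functions as the commutator of the two vector fields viewed as first-order differential operators. For any $h\in C^\infty(\mathbb{R}^r)$,
\begin{align*}
    [f_i,f_j](h\circ\Psi)
    &= f_i\big(f_j(h\circ\Psi)\big) - f_j\big(f_i(h\circ\Psi)\big)\\
    &= f_i\big((\bar f_j h)\circ\Psi\big) - f_j\big((\bar f_i h)\circ\Psi\big)\\
    &= \big(\bar f_i(\bar f_j h)\big)\circ\Psi - \big(\bar f_j(\bar f_i h)\big)\circ\Psi\\
    &= \big([\bar f_i,\bar f_j]h\big)\circ\Psi ,
\end{align*}
where the first and last lines are the commutator expression for the bracket, and the two middle applications of \eqref{eq:psirel_func} use test function $h$ for $f_j$ and then test function $\bar f_j h$ for $f_i$ (and symmetrically). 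Since this holds for every $h$, the vector field $[f_i,f_j]$ is $\Psi$-related to $[\bar f_i,\bar f_j]$, i.e.\ $\frac{\partial\Psi}{\partial x}\cdot[f_i,f_j](x) = [\bar f_i,\bar f_j](\Psi(x))$ for all $x$, which is precisely the claimed identity $[\bar f_i,\bar f_j]_{\Psi(x)} = \Psi_*[f_i,f_j]_x$. The index labeling is immaterial: the argument uses only that each pair is $\Psi$-related, so it applies verbatim to the pairs involving the drift ($f_0:=f$, $\bar f_0:=\bar f$), which is how Lemma~\ref{lemma:dimpreserve2} invokes it over the full range $0\le i\ne j\le m$.

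I do not expect a real obstacle here — this is the classical naturality of the Lie bracket. The only point needing mild care is that $\Psi$ need not be surjective, so a priori $\Psi_*[f_i,f_j]$ and $[\bar f_i,\bar f_j]$ are only being compared on the image $\Psi(\mathbb{R}^n)$; but \eqref{eq:psirel_func}, and hence the whole computation, is a pointwise statement at each $\Psi(x)$, so no surjectivity (nor even injectivity) is required. The secondary bit of bookkeeping is simply to keep straight which scalar function plays the role of the test function at each use of \eqref{eq:psirel_func}; everything else is the routine commutator identity for vector fields.
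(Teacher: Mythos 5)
Your proof is correct and follows essentially the same route as the paper's: both arguments test the bracket against an arbitrary $h\in C^\infty(\mathbb{R}^r)$, use the functional characterization $f_i(h\circ\Psi)=(\bar f_i h)\circ\Psi$ of $\Psi$-relatedness twice, and conclude via the commutator identity. The only cosmetic difference is the direction of the chain of equalities (you start from $[f_i,f_j](h\circ\Psi)$, the paper starts from $[\bar f_i,\bar f_j]_{\Psi(x)}h$), so no substantive comparison is needed.
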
 
\begin{proof}
    Since the smooth vector fields $f_i\in\mathfrak{X}(\mathbb{R}^n)$ and $\bar{f}_i\in\mathfrak{X}(\mathbb{R}^r)$ are $\Psi$-related, for any smooth function $h\in\mathcal{C}(\mathbb{R}^n)$, we have
    \begin{align} \label{proof:dimension}
        [\bar{f}_i , \bar{f}_j]_{\Psi(x)}h &= \bar{f}_i|_{\Psi(x)}(\bar{f}_jh) - \bar{f}_j|_{\Psi(x)}(\bar{f}_ih) \nonumber\\
        &=(\Psi_{\ast}f_i|_x)\bar{f}_jh - (\Psi_{\ast}f_j|_x)\bar{f}_ih \nonumber\\
        &=f_i|_x ((\bar{f}_j h)\circ\Psi) - f_j|_x ((f_i h)\circ\Psi) \nonumber\\
        &=f_i|_x (f_j (h\circ\Psi)) - f_j|_x (f_i (h\circ\Psi)) \\
        &=[f_i, f_j]_x (h\circ\Psi) =\Psi_{\ast} [f_i, f_j]_x h \nonumber
    \end{align}
    where the equality in \eqref{proof:dimension} follows from the fact that $(\bar{f}_j h)\circ\Psi|_x = (\bar{f}_j h)|_{\Psi(x)} = \bar{f}_j|_{\Psi(x)}h = (\Psi_{\ast} f_j |_x )h = f_j|_x(h\circ\Psi) = f_j(h\circ\Psi) |_x$.
\end{proof}


\begin{lemma} \label{lemma:stable}
    Consider the bilinear system 
    $
        \dot{z}(t) = Az(t)+\sum_{i=1}^{m} u_i(t)(B_iz+D_i).
    $
    Define the Lyapunov function $V:{ N }\to\mathbb{R}$ as $V(z) = z^{'}Pz$ for some positive definite matrix $P\succ 0$, then the feedback control in the form of 
    \begin{align*}
        u(t) = -KS^{'}(z(t))Pz(t)
    \end{align*}
    will stabilize the bilinear system if $K$ is a positive definite matrix satisfying
    \begin{align*}
        C_k = \|\sqrt{K}\| > \sqrt{\frac{\lambda_{\min}(Q)}{2 m \lambda_{\max}(\bar{P}_\ell)^2 \varepsilon^2}},
    \end{align*}
    where $\sqrt{K}$ is the matrix satisfying $\sqrt{K}^{'}\sqrt{K} = K$, $Q = A^{'}P+PA$, and $\bar{P}_\ell$ is a matrix satisfying $\lambda_{\max}(\bar{P}_\ell) = \max \{ \lambda_{\max}(\begin{pmatrix} PB_i & \frac{1}{2}PD_i \\ \frac{1}{2}D_i^{'}P^{'} & 1 \end{pmatrix}) |~ i=1,\ldots,m\}$.
\end{lemma}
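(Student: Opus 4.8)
The plan is a Lyapunov-redesign argument. First I would substitute the proposed feedback $u(t) = -KS^{'}(z(t))Pz(t)$ into the bilinear dynamics to obtain the closed loop $\dot z = Az - S(z)KS^{'}(z)Pz$ with $S(z) = [\,B_1z+D_1,\dots,B_mz+D_m\,]$. Differentiating $V(z) = z^{'}Pz$ along this trajectory and using that $K = K^{'}\succ 0$ (so $K+K^{'} = 2K$ and a square root $\sqrt K$ with $\sqrt K^{'}\sqrt K = K$ exists) yields
\begin{align*}
\dot V = z^{'}(A^{'}P+PA)z - 2\,z^{'}PS(z)KS^{'}(z)Pz = z^{'}Qz - 2\big\|\sqrt K\,S^{'}(z)Pz\big\|^{2},
\end{align*}
with $Q = A^{'}P+PA$. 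Since the origin is an equilibrium of the closed loop and $V$ is positive definite and radially unbounded, it then suffices to establish $\dot V < 0$ for all $z \neq 0$ on the submanifold on which the system evolves; the Lyapunov theorem then gives asymptotic stability, i.e. stabilization of the bilinear system (and, composing with $\mathcal P_n$, of the original nonlinear system).

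Next I would split on the sign of the first term. On $\{z : z^{'}Qz < 0\}$ we have $\dot V < 0$ immediately, since $\|\sqrt K\,S^{'}(z)Pz\|^{2}\ge 0$; the work is confined to the cone $\mathcal C = \{z \neq 0 : z^{'}Qz \ge 0\}$, where I must produce the strict inequality $z^{'}Qz < 2\|\sqrt K\,S^{'}(z)Pz\|^{2}$. The left side I would bound above by a Rayleigh-quotient estimate $z^{'}Qz \le \lambda_{\max}(Q)\,\|z\|^{2}$. For the right side, writing $\bar z = (z^{'},1)^{'}$ and completing the square identifies each component $z^{'}P(B_iz+D_i)$ of $S^{'}(z)Pz$ with a quadratic form $\bar z^{'}M_i\bar z$ whose (symmetrized) matrix is, up to normalization, the block matrix $\bigl(\begin{smallmatrix} PB_i & \tfrac12 PD_i \\ \tfrac12 D_i^{'}P^{'} & 1\end{smallmatrix}\bigr)$ defining $\bar P_\ell$; this is what lets the norm of $S^{'}(z)Pz$ be controlled in terms of $\lambda_{\max}(\bar P_\ell)$ and the number $m$ of inputs, and, together with a lower bound on $\|S^{'}(z)Pz\|$ over $\mathcal C$ coming from the hypothesis $S^{'}Pz\neq 0$ (quantified by the parameter $\varepsilon$), reduces the desired inequality to the scalar threshold on $C_K = \|\sqrt K\|$ displayed in the statement. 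Taking $K$ scalar or diagonal makes the passage from $\|\sqrt K\|$ to the quadratic form $\|\sqrt K\,S^{'}(z)Pz\|^{2}$ transparent.

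I expect the main obstacle to be exactly this last step: upgrading the qualitative nondegeneracy hypothesis $S^{'}Pz\neq 0$ into a uniform quantitative lower bound for $\|S^{'}(z)Pz\|$ on $\mathcal C$. Because $S(z)$ is affine in $z$, the map $z\mapsto\|S^{'}(z)Pz\|^{2}$ is a degree-four polynomial, so one must be careful to extract the correct homogeneous scale on the cone and to ensure the estimate does not degenerate as $\|z\|\to 0$ or $\|z\|\to\infty$; the augmentation $\bar z = (z^{'},1)^{'}$ and the matrices $\bar P_\ell$ are the bookkeeping devices that make this manageable. Once the bound is in place, the two case estimates combine to give $\dot V<0$ away from the origin and the lemma follows. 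I would also record the practical shortcut noted by the authors: choosing $P$ so that $Q = A^{'}P+PA\prec 0$ empties the cone $\mathcal C$, after which $\dot V<0$ holds for every $K\succ 0$ with no threshold needed.
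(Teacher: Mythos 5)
Your setup is correct and coincides with the paper's up to the computation $\dot V = z^{'}Qz - 2\|\sqrt{K}\,S^{'}(z)Pz\|^{2}$ and the case split on the sign of $z^{'}Qz$, but the proposal stops exactly where the proof has to happen. On the cone $\mathcal{C}=\{z\neq 0: z^{'}Qz\ge 0\}$ you need a \emph{lower} bound on $\|\sqrt{K}\,S^{'}(z)Pz\|^{2}$ that dominates $z^{'}Qz$, and you explicitly defer this step as ``the main obstacle.'' The devices you list cannot supply it: the matrices $\bar P_i$ and the constant $\lambda_{\max}(\bar P_\ell)$ give an \emph{upper} bound on each component $|z^{'}P(B_iz+D_i)| = |\bar z^{'}\bar P_i\bar z - 1|$ via the Rayleigh quotient, which is the wrong direction for showing that the damping term wins; and the pointwise hypothesis $S^{'}Pz\neq 0$ does not produce a uniform positive lower bound on the noncompact cone $\mathcal{C}$ --- the quartic $z\mapsto\|S^{'}(z)Pz\|^{2}$ can vanish to higher order at the origin than the quadratic $z^{'}Qz$, so no fixed gain $K$ forces $\dot V<0$ near $z=0$ in general. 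The proposal is also aimed at the wrong constant: upper-bounding $z^{'}Qz\le\lambda_{\max}(Q)\|z\|^{2}$ on $\mathcal{C}$ would yield a threshold involving $\lambda_{\max}(Q)$, whereas the statement's threshold involves $\lambda_{\min}(Q)$.

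For comparison, the paper's own argument runs the inequality chain in the opposite direction: it starts from the condition $\dot V<0$, lower-bounds the drift term by $\lambda_{\min}(Q)\le \|z^{'}Qz\|/\|z^{'}z\|$, upper-bounds the control term by $2C_K^{2}\sum_{i=1}^{m}\|\bar z^{'}\bar P_i\bar z-1\|^{2}/\|z^{'}z\|\le 2C_K^{2}m\,\lambda_{\max}(\bar P_\ell)^{2}\|z^{'}z\|$, and reads off the gain condition; in that chain $\varepsilon$ plays the role of a lower bound on $\|z\|$ (so the guarantee is really $\dot V<0$ outside an $\varepsilon$-ball), not a quantification of the nondegeneracy $S^{'}Pz\neq 0$ as you interpret it. So where your route requires manufacturing a lower bound on $\|S^{'}(z)Pz\|$ that you do not provide, the paper instead uses only upper bounds on that term together with $\lambda_{\min}(Q)$, which is why its constants sit where they do. Either way, the decisive estimate on the cone is precisely the piece your proposal leaves open, so as written it does not establish the lemma.
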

\begin{proof}
    Let $S := [B_1z+D_1,~\cdots, ~B_mz+D_m ]$. The condition $\dot{V}<0$ implies 
    \begin{align*}
        &\|z^{'}Qz\| 
        < 2 \|z^{'} PS\sqrt{K}\|^2 \leq 2C_K^2 \|z^{'} PS\|^2 \\
        & = 2C_K^2 \sum_{i=1}^{m}\|z^{'} P(B_iz+D_i)\|^2 = 2C_K^2 \sum_{i=1}^{m} \left\|\bar{z}^{'} \bar{P}_i \bar{z} -1 \right\|^2, 
    \end{align*}
    where $\bar{z} = (z^{'}, 1)^{'}$ and
    $\bar{P}_i=\begin{pmatrix}
            PB_i & \frac{1}{2}PD_i \\ \frac{1}{2}D_i^{'}P^{'} & 1
        \end{pmatrix}.$
    Then
    \begin{align*}
        & \lambda_{\min}(Q) \leq \frac{\|z^{'}Qz\|}{\|z^{'}z\|} < 2C_K^2 \sum_{i=1}^{m} \frac{ \left\|\bar{z}^{'} \bar{P}_i \bar{z} -1 \right\|^2 }{\|z^{'}z\|} \\
        &= 2C_K^2 \sum_{i=1}^{m} \frac{ \left\|\bar{z}^{'} \bar{P}_i \bar{z} -1 \right\|^2 }{\|\bar{z}^{'}\bar{z} -1\|}  \leq 2C_K^2 \sum_{i=1}^{m} \frac{ \left\|\bar{z}^{'} \bar{P}_i \bar{z} \right\|^2 }{\|\bar{z}^{'}\bar{z} \|}\\
        & \leq 2C_K^2 \left(\sum_{i=1}^{m}\lambda_{\max}(\bar{P}_i)^2 \right)\| \bar{z}^{'}\bar{z} \|
    \end{align*}
    where the first and last inequality bound conditions are given by the Rayleigh quotient of a real symmetric matrix and $\lambda_{\min}(\cdot)$ and $\lambda_{\max}(\cdot)$ return the smallest and the largest eigenvalues of the underlying symmetric matrix, respectively. If we further denote $\lambda_{\max}(\bar{P}_\ell) = \max \{ \lambda_{\max}(\bar{P}_i) |~ i=1,\ldots,m\}$, then 
    \begin{align*}
        & \lambda_{\min}(Q)  < 2C_K^2 m \lambda_{\max}(\bar{P}_\ell)^2 \| {z}^{'} {z} \|
    \end{align*}
    which leads to the desired bound condition.
\end{proof}





\bibliographystyle{ieeetr}
\bibliography{references}

\begin{thebibliography}{10}

\bibitem{Isidori_Springer95}
A.~Isidori, {\em Nonlinear Control Systems}.
\newblock Communications and Control Engineering, Springer London, 1995.

\bibitem{Slotine_ANC91}
J.-J.~E. Slotine, W.~Li, {\em et~al.}, {\em Applied nonlinear control},
  vol.~199.
\newblock Prentice hall Englewood Cliffs, NJ, 1991.

\bibitem{Krener_SCL83}
A.~J. Krener and A.~Isidori, ``Linearization by output injection and nonlinear
  observers,'' {\em Systems \& Control Letters}, vol.~3, no.~1, pp.~47--52,
  1983.

\bibitem{Iqbal_NE17}
J.~Iqbal, M.~Ullah, S.~G. Khan, B.~Khelifa, and S.~{\'C}ukovi{\'c}, ``Nonlinear
  control systems-a brief overview of historical and recent advances,'' {\em
  Nonlinear Engineering}, vol.~6, no.~4, pp.~301--312, 2017.

\bibitem{Olfati_MIT01}
R.~Olfati-Saber, {\em Nonlinear control of underactuated mechanical systems
  with application to robotics and aerospace vehicles}.
\newblock PhD thesis, Massachusetts Institute of Technology, 2001.

\bibitem{Schwarz_Springer88}
H.~Schwarz, H.~Dorissen, and L.~Guo, ``Bilinearization of nonlinear systems,''
  in {\em Systems Analysis and Simulation I: Theory and Foundations},
  pp.~89--96, Springer, 1988.

\bibitem{Pardalos_Springer10}
P.~M. Pardalos and V.~A. Yatsenko, {\em Optimization and control of bilinear
  systems: theory, algorithms, and applications}, vol.~11.
\newblock Springer Science \& Business Media, 2010.

\bibitem{Lo_SIAM75}
J.~T.-H. Lo, ``Global bilinearization of systems with control appearing
  linearly,'' {\em SIAM Journal on Control}, vol.~13, no.~4, pp.~879--885,
  1975.

\bibitem{Deutscher_MCMDS05}
J.~Deutscher, ``Nonlinear model simplification using l 2-optimal
  bilinearization,'' {\em Mathematical and Computer Modelling of Dynamical
  Systems}, vol.~11, no.~1, pp.~1--19, 2005.

\bibitem{Tie_TAC23}
L.~Tie, ``Controllability of continuous-time bilinear systems without using the
  lie-algebraic conditions: A new perspective,'' {\em IEEE Transactions on
  Automatic Control}, vol.~68, no.~4, pp.~2094--2108, 2023.

\bibitem{Tie_JCO14}
L.~Tie, ``On near-controllability, nearly controllable subspaces, and
  near-controllability index of a class of discrete-time bilinear systems: a
  root locus approach,'' {\em SIAM Journal on Control and Optimization},
  vol.~52, no.~2, pp.~1142--1165, 2014.

\bibitem{Breiten_SCL10}
T.~Breiten and T.~Damm, ``Krylov subspace methods for model order reduction of
  bilinear control systems,'' {\em Systems \& Control Letters}, vol.~59, no.~8,
  pp.~443--450, 2010.

\bibitem{Bruni_TAC74}
C.~Bruni, G.~DiPillo, and G.~Koch, ``Bilinear systems: An appealing class of"
  nearly linear" systems in theory and applications,'' {\em IEEE Transactions
  on automatic control}, vol.~19, no.~4, pp.~334--348, 1974.

\bibitem{Brockett_Springer75}
R.~W. Brockett, ``On the reachable set for bilinear systems,'' in {\em Variable
  Structure Systems with Application to Economics and Biology: Proceedings of
  the Second US-Italy Seminar on Variable Structure Systems, May 1974},
  pp.~54--63, Springer, 1975.

\bibitem{Brockett_Springer81}
R.~W. Brockett, ``Nonlinear systems and nonlinear estimation theory,'' in {\em
  Stochastic Systems: The Mathematics of Filtering and Identification and
  Applications: Proceedings of the NATO Advanced Study Institute held at Les
  Arcs, Savoie, France, June 22--July 5, 1980}, pp.~441--477, Springer, 1981.

\bibitem{Brockett_springer73}
R.~W. Brockett, ``Lie algebras and lie groups in control theory,'' in {\em
  Geometric Methods in System Theory: Proceedings of the NATO Advanced Study
  Institute held at London, England, August 27-September 7, 1973}, pp.~43--82,
  Springer, 1973.

\bibitem{Tsiligiannis_JMAA89}
C.~A. Tsiligiannis and G.~Lyberatos, ``Normal forms, resonance and bifurcation
  analysis via the carleman linearization,'' {\em Journal of Mathematical
  Analysis and Applications}, vol.~139, no.~1, pp.~123--138, 1989.

\bibitem{Kowalski_WS91}
K.~Kowalski and W.-h. Steeb, {\em Nonlinear dynamical systems and Carleman
  linearization}.
\newblock World Scientific, 1991.

\bibitem{Rauh_IFAC09}
A.~Rauh, J.~Minisini, and H.~Aschemann, ``Carleman linearization for control
  and for state and disturbance estimation of nonlinear dynamical processes,''
  {\em IFAC Proceedings Volumes}, vol.~42, no.~13, pp.~455--460, 2009.
\newblock 14th IFAC Conference on Methods and Models in Automation and
  Robotics.

\bibitem{Amini_Arxiv22}
A.~Amini, C.~Zheng, Q.~Sun, and N.~Motee, ``Carleman linearization of nonlinear
  systems and its finite-section approximations,'' 2022.

\bibitem{Steeb_JMAA80}
W.-H. Steeb and F.~Wilhelm, ``Non-linear autonomous systems of differential
  equations and carleman linearization procedure,'' {\em Journal of
  Mathematical Analysis and Applications}, vol.~77, no.~2, pp.~601--611, 1980.

\bibitem{Rotondo_ECC22}
D.~Rotondo, G.~Luta, and J.~H. {Ulfsnes Aarvåg}, ``Towards a taylor-carleman
  bilinearization approach for the design of nonlinear state-feedback
  controllers,'' {\em European Journal of Control}, vol.~68, p.~100670, 2022.
\newblock 2022 European Control Conference Special Issue.

\bibitem{Fang_CDC16}
Y.~Fang and A.~Armaou, ``A formulation of advanced-step bilinear carleman
  approximation-based nonlinear model predictive control,'' in {\em 2016 IEEE
  55th Conference on Decision and Control (CDC)}, pp.~4027--4032, IEEE, 2016.

\bibitem{Mauroy_Springer20}
A.~Mauroy, Y.~Susuki, and I.~Mezic, {\em Koopman operator in systems and
  control}, vol.~484.
\newblock Springer, 2020.

\bibitem{Bevanda_ARC21}
P.~Bevanda, S.~Sosnowski, and S.~Hirche, ``Koopman operator dynamical models:
  Learning, analysis and control,'' {\em Annual Reviews in Control}, vol.~52,
  pp.~197--212, 2021.

\bibitem{Goswami_ARC22}
D.~Goswami and D.~A. Paley, ``Bilinearization, reachability, and optimal
  control of control-affine nonlinear systems: A koopman spectral approach,''
  {\em IEEE Transactions on Automatic Control}, vol.~67, no.~6, pp.~2715--2728,
  2022.

\bibitem{Bruder_RAL21}
D.~Bruder, X.~Fu, and R.~Vasudevan, ``Advantages of bilinear koopman
  realizations for the modeling and control of systems with unknown dynamics,''
  {\em IEEE Robotics and Automation Letters}, vol.~6, no.~3, pp.~4369--4376,
  2021.

\bibitem{Narasingam_IJC23}
A.~Narasingam, S.~H. Son, and J.~S.-I.~K. and, ``Data-driven feedback
  stabilisation of nonlinear systems: Koopman-based model predictive control,''
  {\em International Journal of Control}, vol.~96, no.~3, pp.~770--781, 2023.

\bibitem{Zhang_Arxiv22}
W.~Zhang and J.-S. Li, ``Koopman bilinearization of nonlinear control
  systems,'' {\em arXiv preprint arXiv:2211.07112}, 2022.

\bibitem{Mauroy_TAC19}
A.~Mauroy and J.~Goncalves, ``Koopman-based lifting techniques for nonlinear
  systems identification,'' {\em IEEE Transactions on Automatic Control},
  vol.~65, no.~6, pp.~2550--2565, 2019.

\bibitem{Otto_SIAM24}
S.~Otto, S.~Peitz, and C.~Rowley, ``Learning bilinear models of actuated
  koopman generators from partially observed trajectories,'' {\em SIAM Journal
  on Applied Dynamical Systems}, vol.~23, no.~1, pp.~885--923, 2024.

\bibitem{Huang_Springer20}
B.~Huang, X.~Ma, and U.~Vaidya, ``Data-driven nonlinear stabilization using
  koopman operator,'' {\em The Koopman Operator in Systems and Control:
  Concepts, Methodologies, and Applications}, pp.~313--334, 2020.

\bibitem{Brunton_Arxiv21}
S.~L. Brunton, M.~Budi{\v{s}}i{\'c}, E.~Kaiser, and J.~N. Kutz, ``Modern
  koopman theory for dynamical systems,'' {\em arXiv preprint
  arXiv:2102.12086}, 2021.

\bibitem{Otto_ARC21}
S.~E. Otto and C.~W. Rowley, ``Koopman operators for estimation and control of
  dynamical systems,'' {\em Annual Review of Control, Robotics, and Autonomous
  Systems}, vol.~4, no.~1, pp.~59--87, 2021.

\bibitem{Kaiser_ML21}
E.~Kaiser, J.~N. Kutz, and S.~L. Brunton, ``Data-driven discovery of koopman
  eigenfunctions for control,'' {\em Machine Learning: Science and Technology},
  vol.~2, no.~3, p.~035023, 2021.

\bibitem{Aicardi_RAM95}
M.~Aicardi, G.~Casalino, A.~Bicchi, and A.~Balestrino, ``Closed loop steering
  of unicycle like vehicles via lyapunov techniques,'' {\em IEEE robotics \&
  automation magazine}, vol.~2, no.~1, pp.~27--35, 1995.

\bibitem{Kuan_CDC24}
Y.-H. Kuan, W.~Zhang, and J.-S. Li, ``Computational moment control of ensemble
  systems,'' in {\em 2024 IEEE 63rd Conference on Decision and Control (CDC)},
  pp.~1251--1256, IEEE, 2024.

\bibitem{Lee_Springer03}
J.~Lee, {\em Introduction to Smooth Manifolds}.
\newblock Graduate Texts in Mathematics, Springer, 2003.

\bibitem{Brockett_SIAM72}
R.~W. Brockett, ``System theory on group manifolds and coset spaces,'' {\em
  SIAM Journal on control}, vol.~10, no.~2, pp.~265--284, 1972.

\bibitem{Brockett_IEEE76}
R.~W. Brockett, ``Nonlinear systems and differential geometry,'' {\em
  Proceedings of the IEEE}, vol.~64, no.~1, pp.~61--72, 1976.

\bibitem{Althoff_CORA15}
M.~Althoff, ``An introduction to {CORA} 2015,'' in {\em Proc. of the 1st and
  2nd Workshop on Applied Verification for Continuous and Hybrid Systems},
  pp.~120--151, EasyChair, December 2015.

\bibitem{Gutman_TAC81}
P.-O. Gutman, ``Stabilizing controllers for bilinear systems,'' {\em IEEE
  Transactions on Automatic Control}, vol.~26, no.~4, pp.~917--922, 1981.

\bibitem{Amato_TCS09}
F.~Amato, C.~Cosentino, A.~S. Fiorillo, and A.~Merola, ``Stabilization of
  bilinear systems via linear state-feedback control,'' {\em IEEE Transactions
  on Circuits and Systems II: Express Briefs}, vol.~56, no.~1, pp.~76--80,
  2009.

\bibitem{Berkovitz_CRC13}
L.~D. Berkovitz and N.~G. Medhin, {\em Nonlinear optimal control theory}.
\newblock CRC Press Boca Raton, FL, 2013.

\bibitem{Bliss_46}
G.~A. Bliss, ``Lectures on the calculus of variations,'' {\em University of
  Chicago press}, pp.~189--190, 1946.

\bibitem{Fleming_Springer12}
W.~H. Fleming and R.~W. Rishel, {\em Deterministic and stochastic optimal
  control}, vol.~1.
\newblock Springer Science \& Business Media, 2012.

\bibitem{Jurdjevic_Cambridge97}
V.~Jurdjevic, {\em Geometric control theory}.
\newblock Cambridge university press, 1997.

\end{thebibliography}


\end{document}